\theoremstyle{plain}
\newtheorem{theorem}{Theorem}
\newtheorem{fact}{Fact}
\newtheorem*{claim}{Claim}
\newtheorem*{theoremA}{Theorem A}
\newtheorem*{theoremB}{Theorem B}
\newtheorem*{lemmaA}{Algebric Lemma}
\newtheorem{pro}[theorem]{Proposition}
\newtheorem{lem}[theorem]{Lemma}
\newtheorem{remark}[theorem]{Remark}
\newcommand{\R}{\mathbb{R}}
\DeclareMathOperator{\Ker}{ker}
\DeclareMathOperator{\Im1}{Im}
\DeclareMathOperator{\Span}{span}
\newcommand{\bpr}{\begin{proposition}}
\newcommand{\epr}{\end{proposition}}
\newcommand{\bco}{\begin{corollary}}
\newcommand{\eco}{\end{corollary}}
\newcommand{\blm}{\begin{lemma}}
\newcommand{\elm}{\end{lemma}}
\newcommand{\bdf}{\begin{definition}}
\newcommand{\edf}{\end{definition}}
\newcommand{\bpm}{\begin{pmatrix}}
\newcommand{\epm}{\end{pmatrix}}
\newcommand{\beq}{\begin{equation}}
\newcommand{\eeq}{\end{equation}}
\newcommand{\brm}{\begin{remark}}
\newcommand{\erm}{\end{remark}}
\newcommand{\bcj}{\begin{conjecture}}
\newcommand{\ecj}{\end{conjecture}}
\newcommand{\bqu}{\begin{question}}
\newcommand{\equ}{\end{question}}
\newcommand{\bex}{\begin{exercise}}
\newcommand{\eex}{\end{excercise}}
\newcommand{\bit}{\begin{itemize}}
\newcommand{\eit}{\end{itemize}}
\newcommand{\ben}{\begin{enumerate}}
\newcommand{\een}{\end{enumerate}}
\newcommand\Quotient[2]{
        \mathchoice
            {
                \text{\raise1ex\hbox{\thinspace $#1$}\Big{/} \lower1ex\hbox{$#2$} \thinspace}%
            }
            {
                #1\,/\,#2
            }
            {
                #1\,/\,#2
            }
            {
                #1\,/\,#2
            }
    }
\newcommand\GIT[2]{
        \mathchoice
            {
                \text{\raise1ex\hbox{\thinspace $#1$}\Big{/}\!\!\!\!\Big{/} \lower1ex\hbox{$#2$} \thinspace}%
            }
            {
                #1\,/\,#2
            }
            {
                #1\,/\,#2
            }
            {
                #1\,/\,#2
            }
    }
\begin{document}

\thispagestyle{empty}







\title{Real K\"ahler Submanifolds in Codimension $6$}

\author{Alcides de Carvalho \& Felippe Guimar\~aes \thanks{The first author was supported by CNPq-Brazil,
and the second author by a grant of the CAPES.}}



\thispagestyle{empty}

\maketitle

\begin{abstract}
\bigskip
We show that a real K\"ahler submanifold in codimension $6$ is essentially a holomorphic submanifold of another real K\"ahler submanifold in lower codimension if the second fundamental form is not sufficiently degenerated. We also give a shorter proof of this result when the real K\"ahler submanifold is minimal, using recent results about isometric rigidity.
\end{abstract}
\section{Introduction}
\bigskip

Let $M^{2n}$ be a K\"ahler manifold of real dimension $2n$. A {\it real K\"ahler submanifold} of codimension $p$ is an isometric immersion $f : M^{2n}\to \R^{2n+p}$. The structure of this submanifolds are sensitive to its codimension. For instance, by \cite{FZKaehlerCylinder}, when the codimension is one and $M^{2n}$ is complete then $f$ must be the product of an isometric embedding of a complete surface $g : \Sigma^2 \rightarrow \R^3$ with the identity map of $\mathbb{C}^{n-1}$. In other words, surfaces in $\R^3$ are essentially the only real K\"ahler submanifolds of codimension one. In codimension two, the situation is also well-understood. Namely, the minimal case was analyzed in details in \cite{DRMinimalKahler}, and the non-minimal case was classified in \cite{FZCod2}. In codimension three, it was proved in \cite{DGKahlerCod3} that unless the submanifold $M^{2n}$ is a holomorphic hypersurface of a real K\"ahler submanifold of codimension one, the complex codimension of the invariant part of the relative nullity, under the almost complex structure of the K\"ahler manifold, has to be less than or equal to $3$. Recently, this result was extended in \cite{YZKahlerCod4} for codimension $4$. They also conjectured that the result holds for higher codimensions. 

Our goal in this paper is twofold. First we give a simple proof of this conjecture when the immersion is minimal and $p \leq 6$ using the modern tools developed in \cite{DFGenuines}. More precisely, we are going to prove the following result.

\begin{theoremA}\label{mainA}
Let $f : M^{2n}\to \R^{2n+p}$ be a real K\"ahler minimal submanifold with $p\leq 6$ and $\nu_f$ be the index of relative nullity of $f$. Then, along each connected component $U$ of an open dense subset of $M^{2n}$, one of the following holds:
\begin{enumerate}[(i)]
    \item $v_f(x)\geq 2n-2p$ for all $x \in U$;
    \item $f$ extends to a real K\"ahler submanifold, that is, there exist a real K\"ahler submanifold $g : N^{2n+2s}\to \R^{2n+p}$ and a holomorphic embedding $h : U \to N^{n+s}$ such that $f|_U = g \circ h$. Moreover, the extension $g$ is also minimal.
\end{enumerate} 
\end{theoremA}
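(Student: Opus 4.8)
The plan is to exploit the holomorphic structure that minimality confers on the second fundamental form, reduce the geometric dichotomy to an algebraic statement about flat bilinear forms, and then invoke the genuine rigidity machinery of \cite{DFGenuines} to produce the extension.

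First I would complexify. Writing $T_xM \otimes \CC = T^{1,0} \oplus T^{0,1}$ for the splitting induced by the complex structure $J$, I extend the second fundamental form $\alpha$ complex-bilinearly and pass to its $(1,0)$-component $\beta := \alpha^{1,0} : T^{1,0} \times T^{1,0} \to N_fM \otimes \CC$. The hypothesis that $f$ is minimal, together with the fact that $J$ is parallel, is precisely what forces $\beta$ to be holomorphic. Feeding the K\"ahler curvature into the Gauss equation and extracting the $(1,0)$-part then shows that $\beta$ is \emph{flat} with respect to the complex-bilinear symmetric form on $N_fM \otimes \CC \cong \CC^p$, that is, $\langle \beta(Z,W), \beta(Z',W') \rangle = \langle \beta(Z,W'), \beta(Z',W) \rangle$ for all $Z,W,Z',W' \in T^{1,0}$.

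The next step is an algebraic reduction. Since $p \leq 6$, the form $\beta$ takes values in a space of complex dimension at most $6$, and I would apply the algebraic lemma on flat bilinear forms to control the kernel $\mathcal{N}(\beta) = \{Z \in T^{1,0} : \beta(Z,\cdot) = 0\}$. This kernel is exactly the complexification of the $J$-invariant part $\Delta \cap J\Delta$ of the relative nullity, so a bound $\dim_\CC \mathcal{N}(\beta) \geq n - p$ translates directly into $\nu_f \geq \dim_\R(\Delta \cap J\Delta) \geq 2n - 2p$. The dichotomy produced by the lemma is then: either the kernel is this large, landing us in alternative (i); or the image of $\beta$ carries a nontrivial isotropic subspace, which is the algebraic signature of a non-genuine immersion.

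Finally, in the isotropic case I would pass back to geometry. The isotropic splitting of the complexified normal bundle produced by the flat-form analysis shows that $f$ fails to be genuinely rigid, so the extension theorem of \cite{DFGenuines} supplies an isometric immersion $g : N^{2n+2s} \to \R^{2n+p}$ and an embedding $h : U \to N^{n+s}$ with $f|_U = g \circ h$. It then remains to upgrade this to the holomorphic category: because $\beta$ is holomorphic and the subbundles it produces are $J$-invariant, $N$ inherits a complex structure making $g$ a real K\"ahler submanifold and $h$ a holomorphic embedding, with minimality of $g$ inherited from that of $f$. I expect this last upgrade to be the main obstacle, namely verifying that the purely isometric extension coming from genuine rigidity is compatible with the two complex structures, so that $h$ is genuinely holomorphic rather than merely isometric. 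Carefully tracking how $J$ interacts with the normal subbundles supplied by the algebraic lemma is where the real difficulty lies.
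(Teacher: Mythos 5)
Your overall strategy---complexify $\alpha$, use minimality to kill the $(1,1)$-part so that only the flat symmetric $(2,0)$-part $S$ survives, and feed $S$ into an algebraic lemma on flat bilinear forms---is legitimate, but it is actually the route the paper reserves for Theorem~B; the paper's proof of Theorem~A avoids complexification entirely, working instead with the conjugate immersion $\widehat{f}=f_{\pi/2}$ and applying the genuine-rigidity machinery of \cite{DFGenuines} to the pair $\{f,\widehat{f}\}$. Your identification of $\ker S$ with (the $V$-part of) $(\Delta\cap J\Delta)\otimes\CC$ in the minimal case, and the translation $\dim_\CC\ker S\ge n-p\Rightarrow\nu_f\ge 2n-2p$, are correct.

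The genuine gap is in your dichotomy. The algebraic lemma does not say ``either $\dim_\CC\ker S\ge n-p$ or $f$ extends''; it produces a null subspace $E$ of real dimension $2k$ together with the bound $\dim_\CC\ker S_{E^\perp}\ge n-p+k$, which when $k>0$ controls only the kernel of the \emph{projected} form, not of $S$. Passing from that null structure $(\tau,L,D)$ back to geometry, Proposition~9 and Theorem~11 of \cite{DFGenuines} yield ``$f$ extends \emph{or} $f$ is $D^d$-ruled,'' not an extension outright: the extension exists only when $\mathcal{N}(\phi_\tau)$ is transversal to $TM$. Your proposal has no argument for the ruled alternative, and that is precisely where the paper's proof does essentially all of its work: there $d\ge 2n-2p+2l$, the ruling is asymptotic so $\dim(\ker A_\xi\cap D)\ge 2d-2n$, and the relative nullity is recovered as $D\cap\bigcap_{i}\ker A_{\xi_i}$ over a basis $\{\xi_i,\tau\xi_i\}$ of the invariant subbundle, giving $\nu_f\ge(2n-2p)+2l(k+1)-2pk$. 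The required inequality $l(k+1)-pk\ge 0$ in fact \emph{fails} for $p\in\{5,6\}$ when $l=2k=2$, forcing the paper to invoke the sharper ruling estimate $d\ge 2n-2p+3l$ from Theorem~14 of \cite{DFGenuines} to close that case. By contrast, the step you single out as the main obstacle---upgrading the isometric extension to a K\"ahler one---is comparatively soft: since $J\oplus\tau$ is a parallel isometry of $TM\oplus L$ compatible with the extension data, the extension inherits the complex structure almost for free (Proposition~\ref{newProp9}).
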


\begin{remark}
A real K\"ahler submanifold is holomorphic if and only if admits a real K\"ahler extension of codimension zero. 
\end{remark}

The second main goal is to remove the minimal hypothesis of Theorem A. For this, we use similar arguments from the ones used in \cite{YZKahlerCod4}, that is, we use the complexified second fundamental of $f$ in order to avoid the necessity of the existence of a conjugate immersion of the minimal real K\"ahler submanifold. The goal is to prove the following theorem.

\begin{theoremB}
Let $f : M^{2n}\to \R^{2n+p}$ be a real K\"ahler submanifold with $p\leq 6$ and $\nu_f$ be the index of relative nullity of $f$. Then, along each connected component $U$ of an open dense subset of $M^{2n}$, one of the following holds:
\begin{enumerate}[(i)]
    \item $\dim \Delta \cap J\Delta \geq 2n - 2p$ for all $x \in U$, where $\Delta$ is the relative nullity and $J$ is the almost complex structure of $f$. In particular, $v_f(x)\geq 2n-2p$ for all $x \in U$;
    \item $f$ extends to a real K\"ahler submanifold, that is, there exist a real K\"ahler submanifold $g : N^{2n+2s}\to \R^{2n+p}$ and a holomorphic embedding $h : U \to N^{n+s}$ such that $f|_U = g \circ h$.
\end{enumerate}  
\end{theoremB}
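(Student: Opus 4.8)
The plan is to follow the philosophy of Theorem~A but to replace the associated (conjugate) family, which exists only in the minimal case, by the complexified second fundamental form, exactly as indicated for \cite{YZKahlerCod4}. Write $J$ for the complex structure, let $T^{1,0}M$ and $T^{0,1}M$ be the $\pm i$--eigenbundles of $J$ in $TM\otimes\CC$, and let $\alpha^{\CC}\colon(TM\otimes\CC)\times(TM\otimes\CC)\to N_fM\otimes\CC$ be the $\CC$--bilinear extension of the second fundamental form. First I would introduce the $(2,0)$--part $\beta:=\alpha^{\CC}|_{T^{1,0}M\times T^{1,0}M}$ and the $(1,1)$--part $\gamma:=\alpha^{\CC}|_{T^{1,0}M\times T^{0,1}M}$. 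Since $M^{2n}$ is K\"ahler and the ambient space is flat, the curvature tensor vanishes on $T^{1,0}M$, so the Gauss equation reduces to $\langle\beta(Z,W),\beta(Z',W')\rangle=\langle\beta(Z,W'),\beta(Z',W)\rangle$ for all $Z,W,Z',W'\in T^{1,0}M$; that is, $\beta$ is a flat symmetric bilinear form valued in the $p$--dimensional complex space $N_fM\otimes\CC$ equipped with the nondegenerate, symmetric $\CC$--bilinear extension of the normal metric. A direct computation identifies the complexification of $\Delta\cap J\Delta$ with the common kernel of $\beta$ and $\gamma$ inside $T^{1,0}M$, so controlling these kernels controls $\dim(\Delta\cap J\Delta)$.

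Next I would apply the algebraic classification of flat bilinear forms (the Algebraic Lemma) to $\beta$ at a generic point, using $p\le 6$. This should produce a dichotomy: either the kernel $\Nn(\beta)=\{Z\in T^{1,0}M:\beta(Z,\cdot)=0\}$ has complex dimension at least $n-p$, or $\beta$ is degenerate, in the sense that its image is contained in a proper subspace of $N_fM\otimes\CC$, equivalently there is a nontrivial subbundle of the normal bundle untouched by $\beta$. In the first alternative one must still absorb the $(1,1)$--part $\gamma$, which is the essential new difficulty relative to the minimal case, where minimality forces $\gamma\equiv0$: the point is to show that, restricted to $\Nn(\beta)$, the form $\gamma$ is again flat but of strictly smaller effective codimension, so that a bookkeeping on the codimension yields $\dim_{\CC}\big(\Nn(\beta)\cap\ker\gamma\big)\ge n-p$, i.e.\ $\dim(\Delta\cap J\Delta)\ge 2n-2p$, which is alternative~(i). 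Because $\Delta\cap J\Delta\subseteq\Delta$ one automatically gets $\nu_f\ge \dim(\Delta\cap J\Delta)\ge 2n-2p$, the stated bound on the index of relative nullity.

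In the remaining alternative, the algebraic degeneracy of $\beta$ should be upgraded to geometry. The degenerate directions of $\beta$ together with $J$ furnish a $J$--invariant distribution along which $f$ is ruled, and the piece of the normal bundle avoided by $\beta$ is, after restricting to an open dense subset where all relevant dimensions are locally constant, a parallel normal subbundle. Using the extension machinery of \cite{DFGenuines} I would then integrate these data to produce a real K\"ahler submanifold $g\colon N^{2n+2s}\to\R^{2n+p}$ of lower codimension containing $f|_U$, together with a holomorphic embedding $h\colon U\to N^{n+s}$ with $f|_U=g\circ h$; the holomorphicity of $h$ follows from the $J$--invariance of the distributions involved and from $M$ being K\"ahler. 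Finally, stratifying $M^{2n}$ by the locally constant values of $\dim\Nn(\beta)$, $\dim(\Delta\cap J\Delta)$ and the rank of the first normal bundle yields the open dense subset, and on each connected component exactly one of the two alternatives holds.

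The main obstacle I anticipate is the passage from the purely algebraic degeneracy of $\beta$ to the geometric extension: one must check that the candidate distributions are smooth and integrable, that the relevant normal subbundle is parallel for the normal connection, and that the resulting $N$ is genuinely a real K\"ahler submanifold into which $M$ embeds holomorphically with strictly positive $s$. A secondary but real difficulty, invisible in the minimal case, is the treatment of the $(1,1)$--part $\gamma$: without a conjugate immersion one cannot assume $\gamma\equiv0$, so the codimension bookkeeping in the Algebraic Lemma must be carried out for the pair $(\beta,\gamma)$ rather than for $\beta$ alone.
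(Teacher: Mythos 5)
Your overall philosophy (complexify the second fundamental form, apply an algebraic lemma on flat bilinear forms, feed the degeneracy into the extension machinery of \cite{DFGenuines}) matches the paper's, but there are two genuine gaps. First, your dichotomy is not the right one. For the $(2,0)$--part $\beta$ (the paper's $S$), ``degenerate'' must mean that its image $\mathcal{S}(\beta)$ has a nontrivial \emph{isotropic} part $\mathcal{S}(\beta)\cap\mathcal{S}(\beta)^{\perp}=E\otimes\CC$ with respect to the $\CC$--bilinear extension of the metric --- not that the image lies in a proper subspace. It is precisely this null part that carries the isometry $\tau$ with $\tau^{2}=-I$ satisfying $(C_1)$--$(C_2)$, without which the maps $F(v)=f(x)+v$ of \eqref{extension} cannot be shown to be isometric K\"ahler extensions. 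More importantly, even when this null part is nontrivial the extension can be \emph{trivial}; the correct alternative (the paper's Theorem~\ref{GenuineKahler}, modelled on Theorem~14 of \cite{DFGenuines}) is that $f$ is then $D^{d}$--ruled with $d\ge 2n-2p+3l$, where $l=2k$ is the rank of the null bundle. Conclusion (i) in that ruled case does \emph{not} follow from the algebraic lemma alone: one must use that $D^{d}$ is asymptotic, so $\dim(\ker A_{\xi}\cap D)\ge 2n+6l-4p$, intersect over a basis $\{\xi_1,\dots,\xi_k,\tau\xi_1,\dots,\tau\xi_k\}$ of $L$ using $\Ker A_{\xi}=\Ker A_{\tau\xi}$, and verify $3l(k+1)-2pk=2k(3k+3-p)\ge 0$. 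This last inequality is where the hypothesis $p\le 6$ actually enters; your proposal never uses $p\le6$ in a substantive way and omits this entire branch of the argument.

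Second, your ``absorption'' of the $(1,1)$--part $\gamma$ (the paper's $H$) is asserted rather than proved, and the order in which you process the two forms undermines the bookkeeping. The paper first disposes of $H$: by Lemma~7 of \cite{FHZKahler}, $\dim_{\CC}\ker H\ge n-\dim_{\CC}\Im1 H$, and --- crucially because $H$ is Hermitian --- $\Im1 H$ is the complexification of a \emph{real} subspace of the normal space, while the symmetry \eqref{eq2.2} forces $S(\ker H,V)\subset(\Im1 H)^{\perp}\otimes\CC$; only then is the flat form $\overline{S}=S|_{\ker H\times V}$ analysed, yielding $\dim_{\CC}\ker H\cap\ker S_{E^{\perp}}\ge n-p+k$. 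In your order (first $\mathcal{N}(\beta)$, then restrict $\gamma$), the image of $\beta$ need not be the complexification of a real subspace, so the ``strictly smaller effective codimension'' for the restricted Hermitian form is not available in the form you need, and no estimate is actually derived. To repair the proposal you would need to state and prove the analogue of the paper's Algebraic Lemma treating $H$ and $S$ jointly via \eqref{eq2.1}--\eqref{eq2.3}, and then add the ruled-case dimension count above.
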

Even though Theorem B implies Theorem A, we have that the proof of Theorem A is quite simpler and is almost a direct consequence of Theorem 14 in \cite{DFGenuines}. On the other hand, the proof of Theorem B will require a structure that is similar to the one constructed in \cite{DFGenuines}.

This paper is organized as follows. In Section \ref{SecPreliminaries}, we establish the
fundamental concepts and notations. In Section \ref{SecMinimal}, we prove Theorem A and, in Section \ref{SecNonMinimal}, we construct specific subbundles in order to obtain Theorem B.

\section{Preliminaries}\label{SecPreliminaries}
In this section we fix the notation and discuss the main tools that will be used in the work. We divide this section in three parts. First, we define the flat bilinear forms that will be used to identify the subbundles necessary to construct the extensions in Theorem A and Theorem B. Second, we study the structure of the tangent and normal bundles of a pair of isometric immersions and their extensions, both given by \cite{DFGenuines}. Finally,  we review some properties about the complexification of the second fundamental form that will be used to obtain the main algebraic lemma required for Theorem B. 

Throughout this section, we assume that $f: M^{2n} \to \R^{2n+p}$ is a real K\"ahler submanifold with almost complex structure $J$.

\subsection{Flat bilinear forms}
Let $W^{p,q}$ be a $(p+q)$-dimensional vector space endowed with a possibly indefinite inner product of signature $(p,q)$, where $q \geq 0$ is the maximal dimension of the subspaces in which the inner product is negative definite. Let $U$ and $V$ be finite dimensional vector spaces. A bilinear form $\gamma\colon U \times V \rightarrow W^{p,q}$ is said to be \textit{flat} if  
\begin{equation}\label{defFlat}
	\left\langle\gamma(X,Z),\gamma(Y,W) \right\rangle - \left\langle
	\gamma(Y,Z),\gamma(X,W)\right\rangle = 0,
\end{equation}
for all $X,Y \in U$ and $Z,W \in V$. Denote the (left) \textit{nullity} of $\gamma$ by $$\mathcal{N}(\gamma) = \{X \in U:\gamma(X,Z)=0 \ \text{for all} \ Z \in V\}$$ and the span of $\gamma$ by $$\mathcal{S}(\gamma) = \text{span}\{\gamma(X,Z):X\in U,Z \in V\}.$$ A subset $S \subset W^{p,q}$ is called \textit{null} if $\left\langle\xi,\eta\right\rangle=0$ for every $\xi,\eta \in S$. We call $X \in U$ a (left) \textit{regular element} of $\gamma$ if $$\dim \gamma^X(V) =\max_{Y \in U}\{\dim \gamma^{Y}(V)\},$$ where $\gamma^Y(Z)= \gamma(Y,Z)$. We will use such elements to obtain the estimates of Theorem B.

Flat bilinear forms were introduced by Moore in \cite{Moore} to study isometric immersions of the round sphere in Euclidean space in low codimension, and can be used to obtain several results about isometric rigidity (see \cite{Base} for more details).

\subsection{The genuine structure}
The work \cite{DFGenuines} provides, in addition to the main results on genuine rigidity,  a strong local geometric structure of any pair of isometric immersions of a given Riemannian manifold. We will make use of such structure in the minimal case.
Assume that $f$ is a minimal isometric immersion and define \begin{equation}\label{oneparemeter}
f_{\theta}(x) = \int_{x_0}^{x} f_*\circ J_{\theta}
\end{equation}
\noindent to be the well-known one-parameter associated family to $f$, where $x_0$ is a fixed point in $M^{2n}$ and $J_\theta = \cos{\theta} I + \sin{\theta}J$ (see \cite{DGGaussMap} or \cite{Base} for more details). We represent by $\widehat{f} = f_{\frac{\pi}{2}}$ the conjugated immersion of $f$ and, by (\ref{oneparemeter}), we can identify $T^{\perp}_fM$ with $T^{\perp}_{\widehat{f}}M$. 

Applying Theorem 11 from \cite{DFGenuines} for the pair $\{f, \widehat{f}\}$ we have a special triple $(\tau, L^l, D^d)$, where $\tau : L \subset T^{\perp}_f M \to \widehat{L} \subset T^{\perp}_{f}M$ is a vector bundle isometry and $D^d = \mathcal{N} (\alpha_{L^{\perp}})\cap \mathcal{N} (\widehat{\alpha}_{\widehat{L}^{\perp}}) \subset TM$ satisfies the following conditions.

\begin{equation}\label{ces}
\left\{
\begin{array}{l}\!(C_1)\;  \mbox{The isometry}
\;\tau\; \mbox{is parallel and preserves second fundamental forms}\\ \mbox{, that is, $\left(\nabla^{\perp}_X \tau \xi\right)_{\widehat{L}} = \tau\left(\nabla^{\perp}_X \xi\right)_{L}$ for all $\xi \in L$, $X \in TM$, and}\\ \mbox{$\tau\circ\alpha_f = \alpha_{\widehat{f}}$;}
\vspace*{1ex}\\
\!(C_2)\;  \mbox{The subbundles}\; L \;\mbox{and} \;\hat L\;
\mbox{are parallel along}\; D^d \;\mbox{in the  normal}\\ \mbox{connections.}
\end{array} \right.
\end{equation}

As in \cite{DFGenuines}, the bilinear form $\phi_{\tau}: (TM \oplus L) \times TM \to L^{\perp}\times\widehat{L}^{\perp}$ given by
\begin{equation}\label{eq1}
\phi_{\tau}(Y +\xi, X) =\left(\left(\tilde{\nabla}_X(Y + \xi)\right)_{L^{\perp}},\left(\tilde{\nabla}_X(JY + \tau\xi)\right)_{\widehat{L}^{\perp}} \right), 
\end{equation}
\noindent plays a key role in the construction of the isometric extensions of $f$ and $\widehat{f}$. Observe that condition $(C_1)$ is equivalent to the flatness of $\phi_{\tau}$.

We give an idea on how the extensions of $f$ and $\widehat{f}$ are constructed. Choose any smooth rank $s$ subbundle $\Lambda \subset TM \oplus L$, and define the maps $F_{\Lambda,f}: \Lambda \rightarrow \R^{2n+p}$ and $\widehat{F}_{\Lambda,\widehat{f}}:\Lambda \rightarrow \R^{2n+p}$ by \begin{equation}\label{extension}
    F(v) = f(x) + v,\quad \widehat{F}(v) = \widehat{f}(x) + (J \oplus \tau)v,\quad v \in \Lambda_x,\, x \in M^{2n}.
\end{equation} 
\noindent We say that $F_{\Lambda,f}$ is a \textit{extension} of $f$ if it is an immersion in some open neighborhood of the $0$-section of $\Lambda$. In particular, if $\Lambda$ is transversal to $TM$ then $F_{\Lambda, f}$ is an extension. By Proposition 12 in \cite{Belezinha}, we have that the two maps $F_{\Lambda,f}$ and $\widehat{F}_{\Lambda,\widehat{f}}$ are isometric if and only if $\phi_{\tau}(\Lambda,TM)$ is a null subspace. By Proposition 9 in \cite{DFGenuines}, we have that $f$ and $\widehat{f}$ are mutually $D^d$-ruled when there is no such extensions. By $D^d$-ruled (or just $d$-ruled) we mean that the leaves of the distribution $D^d$ (a $d$-dimensional distribution) are mapped diffeomorphically by $f$ to (open subsets of) affine subspaces of $\R^{n+p}$. 

Although we can extend the pair $\{f, \widehat{f}\}$, we are just interested in the extensions of $f$ and its almost complex structure to obtain K\"ahler extensions. For the sake of clarity, we will enunciate a version of Proposition $9$ in \cite{DFGenuines} for a real Kahler submanifold. Let $\mathcal{T}: L^l \rightarrow L^l$ be a vector bundle isometry that satisfies conditions $(C_1)$ and $(C_2)$ for $D = \mathcal{N}(\alpha_{L^{\perp}})$. By Lemma 8 in \cite{DFGenuines}, the distribution $D \subset \mathcal{N}(\phi_{\mathcal{T}})$ is integrable and $\mathcal{N}(\phi_{\mathcal{T}}) \cap TM = D$ holds. Define $F = F_{\Lambda,f}$, where $\Lambda$ is given by $\mathcal{N}(\phi_{\mathcal{T}}) = D \oplus \Lambda$. Then, we have the following proposition.

\begin{pro}\label{newProp9}
The isometric immersion $F$ is a $\mathcal{N}(\phi_{\mathcal{T}})$-ruled real K\"ahler extension of $f$. Moreover, there is a smooth orthogonal splittings $T^{\perp}_FN = \mathcal{L}\oplus L^{\perp}$ and a vector bundle isometry $T: \mathcal{L} \rightarrow \mathcal{L}$ such that $\mathcal{N}(\phi_{\mathcal{T}}) = \mathcal{N(\alpha^{F}_{\mathcal{L}^{\perp}})}$, and the triple $(T, \mathcal{L}, \mathcal{N}(\phi_{\mathcal{T}}))$ satisfies conditions $(C_1)$ and $(C_2)$ in (\ref{ces}).
\end{pro}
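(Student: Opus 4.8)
The plan is to realize $F$ as a particular case of the general genuine-extension construction of \cite{DFGenuines} and then to equip the resulting manifold $N$ with a parallel complex structure, this last step being the only point not already contained in the general (non-K\"ahler) theory. First I would record that, since $\mathcal{N}(\phi_{\mathcal{T}}) = D \oplus \Lambda$ and $\mathcal{N}(\phi_{\mathcal{T}}) \cap TM = D$ by Lemma 8 in \cite{DFGenuines}, the subbundle $\Lambda$ is transversal to $TM$; hence by the remark following (\ref{extension}), $F = F_{\Lambda,f}$ is an immersion on a neighborhood of the zero section of $\Lambda$, i.e.\ a genuine extension of $f$. Moreover $\Lambda \subset \mathcal{N}(\phi_{\mathcal{T}})$ forces $\phi_{\mathcal{T}}(\Lambda, TM) = 0$, which is in particular null, so by Proposition 12 in \cite{Belezinha} the companion map $\widehat{F} = \widehat{F}_{\Lambda,\widehat{f}}$ is isometric to $F$. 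Feeding the triple $(\mathcal{T}, L, D)$ and this choice of $\Lambda$ into Proposition 9 of \cite{DFGenuines} then yields, at the level of the underlying Riemannian manifolds, that $F$ is a $\mathcal{N}(\phi_{\mathcal{T}})$-ruled isometric extension together with the orthogonal splitting $T^{\perp}_F N = \mathcal{L} \oplus L^{\perp}$, the isometry $T\colon \mathcal{L} \to \mathcal{L}$, and the identity $\mathcal{N}(\phi_{\mathcal{T}}) = \mathcal{N}(\alpha^F_{\mathcal{L}^{\perp}})$. The rulings are transparent, since $F$ is affine along the fibres $\Lambda$ and is already ruled along $D \subset \mathcal{N}(\alpha_{L^{\perp}})$.

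The heart of the argument is to upgrade $N$ to a K\"ahler manifold and $F$ to a real K\"ahler submanifold. I would define an almost complex structure $\tilde{J}$ on $N$ by letting it act as $J$ on the horizontal distribution (the lift of $TM$) and as the restriction of $J \oplus \mathcal{T}$ on the vertical distribution $\Lambda$, this being exactly the operator appearing in the conjugate extension formula (\ref{extension}). For $\tilde{J}$ to be well defined I must first show that $\mathcal{N}(\phi_{\mathcal{T}})$, and therefore each summand $D$ and $\Lambda$, is invariant under $J \oplus \mathcal{T}$ and that $(J \oplus \mathcal{T})^2 = -\,\mathrm{id}$ there. The second identity reduces to $\mathcal{T}^2 = -\,\mathrm{id}$ on the relevant subbundle, which I expect to extract from the relation $\tau \circ \alpha_f = \alpha_{\widehat{f}}$ of $(C_1)$ together with $\alpha_{\widehat{f}}(X,Y) = \alpha_f(JX,Y)$, valid for the conjugate of a minimal real K\"ahler immersion: applying the first relation twice gives $\mathcal{T}^2 \alpha_f(X,Y) = \mathcal{T}\alpha_f(JX,Y) = \alpha_f(J^2 X, Y) = -\alpha_f(X,Y)$, so $\mathcal{T}^2 = -\,\mathrm{id}$ on $\mathcal{S}(\alpha_f) \cap L$; the invariance of $\mathcal{N}(\phi_{\mathcal{T}})$ I would deduce from the flatness of $\phi_{\mathcal{T}}$. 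In particular the $\tilde{J}$-invariance of $\Lambda$ forces its rank to be even, say $2s$, so that $\dim N = 2n + 2s$ as required. Compatibility of $\tilde{J}$ with the metric induced by $F$ then follows because $J \oplus \mathcal{T}$ is orthogonal.

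It remains to see that $\tilde{J}$ is parallel, i.e.\ that $N$ is genuinely K\"ahler. Rather than differentiate $\tilde{J}$ directly, I would argue that $\widehat{F}$ is the conjugate immersion of $F$: by construction $\widehat{F}(v) = \widehat{f}(x) + (J \oplus \mathcal{T})v$ is obtained from $F$ by rotating the tangent data through $\tilde{J}$, and the whole one-parameter family $F_{\theta}(v) = f_{\theta}(x) + (\cos\theta\,\mathrm{id} + \sin\theta\,(J \oplus \mathcal{T}))v$ consists of mutually isometric immersions, since $\phi_{\mathcal{T}}$ vanishes on $\Lambda \times TM$ and the rotations preserve $\Lambda$. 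The existence of such an associated family in the sense of (\ref{oneparemeter}) characterises $F$ as a minimal real K\"ahler submanifold with complex structure $\tilde{J}$, which yields the first assertion of Proposition~\ref{newProp9} and simultaneously the minimality claimed in Theorem A. Finally, to close the induction I would check that the produced triple $(T, \mathcal{L}, \mathcal{N}(\phi_{\mathcal{T}}))$ again satisfies $(C_1)$ and $(C_2)$: parallelism of $T$ and preservation of $\alpha^F$ are inherited from the corresponding properties of $\mathcal{T}$ through the isometric identification of the normal spaces provided by Proposition 9 of \cite{DFGenuines}, while $\mathcal{L}$ and $\mathcal{L}^{\perp}$ are parallel along $\mathcal{N}(\phi_{\mathcal{T}})$ because $L$ and $L^{\perp}$ are parallel along $D$ and the fibre directions contribute nothing to the normal connection of a ruled extension.

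The step I expect to be the main obstacle is precisely establishing that $\tilde{J}$ is a genuine (integrable and parallel) complex structure, equivalently that the extension is K\"ahler and not merely a Riemannian ruled extension. Everything purely Riemannian is handed to us by \cite{DFGenuines}; the genuinely new work is to show that the complex structure $J$ of $M$ propagates through the $\Lambda$-directions via $\mathcal{T}$ in a parallel fashion, and this is exactly where the special relationship between $f$ and its conjugate $\widehat{f}$, as opposed to an arbitrary isometric pair, must be exploited.
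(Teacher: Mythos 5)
Your Riemannian skeleton matches the paper's: both reduce the existence of the extension, the splitting $T^{\perp}_FN=\mathcal{L}\oplus L^{\perp}$, the isometry $T$, the identity $\mathcal{N}(\phi_{\mathcal{T}})=\mathcal{N}(\alpha^F_{\mathcal{L}^{\perp}})$ and conditions $(C_1)$--$(C_2)$ for the new triple to Proposition 9 of \cite{DFGenuines}, and both observe that $\Lambda$ is $(J\oplus\mathcal{T})$-invariant with $(J\oplus\mathcal{T})^2=-I$ so that $\tilde{J}=J\oplus(J\oplus\mathcal{T})|_{\Lambda}$ is a well-defined orthogonal almost complex structure of even rank. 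The divergence, and the gap, is in the step you yourself flag as the heart of the matter: showing $\tilde{J}$ is parallel. The paper's proof is a one-line observation that $(J\oplus\mathcal{T})|_{\Lambda}$ is \emph{already parallel in the induced connection}, because $J$ is parallel on $M$ and $\mathcal{T}$ is parallel by condition $(C_1)$ (that is exactly what $(\nabla^{\perp}_X\mathcal{T}\xi)_{L}=\mathcal{T}(\nabla^{\perp}_X\xi)_{L}$ buys you); no detour is needed.

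Your detour through the conjugate immersion and the associated family $F_{\theta}$ does not work in the generality in which the proposition is used. Proposition \ref{newProp9} is stated for an abstract triple $(\mathcal{T},L,D)$ satisfying $(C_1)$--$(C_2)$ and is invoked verbatim in Section \ref{SecNonMinimal} for non-minimal $f$, where $\widehat{f}=f_{\pi/2}$ and the one-parameter family (\ref{oneparemeter}) simply do not exist (the $1$-form $f_*\circ J_{\theta}$ is closed only when $f$ is pluriharmonic, i.e.\ minimal). Worse, if your argument did apply it would prove that $F$ is minimal; since $M$ sits in $N$ as a holomorphic (hence minimal) submanifold, this would force $f=F\circ h$ to be minimal, contradicting the non-minimal setting of Theorem B. Even restricted to the minimal case, your appeal to the characterisation ``existence of an associated family $\Rightarrow$ minimal K\"ahler'' is incomplete: you would still have to verify that each $F_{\theta}$ is a well-defined isometric immersion with $F_{\theta*}=F_*\circ\tilde{J}_{\theta}$ (Proposition 12 in \cite{Belezinha} only handles $\theta=\pi/2$), and that verification is essentially equivalent to checking directly that $\tilde{J}$ is parallel. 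Replace the associated-family argument by the direct observation that $(J\oplus\mathcal{T})|_{\Lambda}$ is parallel by $(C_1)$, and your proof becomes both correct and valid in the non-minimal case; the minimality of the extension asserted in Theorem A should then be derived separately in the minimal setting rather than built into this proposition.
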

\begin{proof}
It is a corollary from Proposition 9 in \cite{DFGenuines} once observed that $\Lambda$ is $\mathcal{T}$-invariant and the almost complex structure of $\Lambda$ is the parallel isometry $(J\oplus\mathcal{T})|_{\Lambda}: \Lambda \rightarrow \Lambda$ (in the induced connection).
\end{proof}
\subsection{The structure of the complexified second fundamental form}
Our objective is to show that, at a generic point $x \in M^{2n}$, the second fundamental form takes a rather special form. We shall begin with the following definitions.

We denote by $\Delta(x)$ the {\it relative nullity} of $f$ at $x,$ that is, $$\Delta(x)  = \{Z \in T_xM: \alpha(Z,Y) = 0 \,\, \text{for all } Y \in TM\},$$ and by $\nu_f(x)$ the index of the relative nullity of $f$ at $x,$ i.e., $\nu_f(x) = \dim_{\R} \Delta(x).$ Let us define the complex subspace $\Delta_0\subset T_xM$ by $\Delta_0(x) = \Delta(x)\cap J\Delta(x),$ and denote its complex dimension by $\nu'(x) = \dim_{\mathbb{C}} \Delta_0(x)$. Define the {\it pluriharmonic nullity} $\Delta_J$ of $f$ by $$\Delta_J := \{Z \in TM: \alpha(Z,JY) = \alpha(JZ,Y), \quad\mbox{for all}\quad Y \in TM\},$$ and observe that $\Delta_0 = \Delta\cap \Delta_J$.

We can naturally extend $J$ to a linear operator on the complexified tangent space $T_xM \otimes \mathbb{C}$ then let $V$ be the eigenspaces associated to the eigenvalue $i$ of $J,$ thus $V$ is the complex subspace of $T_xM \otimes \mathbb{C}$ defined as $V = \{X-iJX:X \in T_xM\}.$ Denote  by $W$ the complex linear subspace of $V$ perpendicular to $\Delta_0,$ that is, $W\oplus\overline{W} =  \Delta_0 ^{\perp}\otimes \mathbb{C}.$

Let us now recall the following decomposition of the second fundamental form $\alpha$ of $f$ at $x \in M$. Extend $\alpha$ bilinearly over $\mathbb{C}$ and the inner product in $T^{\perp}M$ bilinearly over $\mathbb{C}$, and still denote it by $\alpha,$ 
$$\alpha: T_xM \otimes \mathbb{C}\times T_xM \otimes \mathbb{C}\to T_x^{\perp}M \otimes \mathbb{C}.$$
Using that $T_xM \otimes \mathbb{C} = V \oplus \overline{V},$ we can write $$H = \alpha |_{V\times \overline{V}} \quad \mbox{and}\quad S = \alpha|_{V\times V}$$ for the $(1,1)$ and $(2,0)$ parts of $\alpha$, respectively. Let $W' \subset V$ be the complex linear subspace given $W' = \ker H \cap \ker S.$ Hence, $\Delta_0\otimes \mathbb{C} =  W'\oplus \overline{W'}$ and $V = W \oplus W'.$ Notice that $\ker H =  \Delta_J$ and, by Lemma 7 in \cite{FHZKahler}, we have $$\dim \Delta_J \geq 2n - 2p.$$  
As observed in \cite{FHZKahler}, the K\"ahlerness of $M$ implies that the Hermitian bilinear form $H$ and the symmetric bilinear form $S$ satisfy the following symmetry conditions

\begin{align}\numberwithin{equation}{section}
\left\langle H(X,\overline{Y}),H(Z,\overline{W}) \right\rangle & = \left\langle H(Z,\overline{Y}),H(X,\overline{W}) \right\rangle \label{eq2.1}\\
\left\langle H(X,\overline{Y}),S(Z,W) \right\rangle & = \left\langle H(Z,\overline{Y}),S(X,W) \right\rangle \label{eq2.2}\\
\left\langle S(X,Y),S(Z,W) \right\rangle & =  \left\langle S(Z,Y), S(X,W) \right\rangle\label{eq2.3}
\end{align}
 
for any $X,Y, Z, W \in V$.

 
\section{Minimal Case}\label{SecMinimal}
The aim of this section is to prove Theorem A. Since $f$ is minimal, as discussed in Section \ref{SecPreliminaries}, there is a pair of isometric immersions $\{f,\widehat{f}\}$ and a triple $(\tau, L^l, D^d)$ that satisfies the conditions $(C_1)$ and $(C_2)$. We have the following result.

\begin{lem}\label{lemma1} Let $E \subset L$ be the maximal invariant subspace by $\tau$. Then, the dimension of $E$ is even and $D=\mathcal{N} (\alpha_{L^{\perp}})\cap \mathcal{N} (\widehat{\alpha}_{\widehat{L}^{\perp}}) = \mathcal{N} (\alpha_{E^{\perp}}).$
\end{lem}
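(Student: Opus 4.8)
The plan is to reduce both assertions to statements about the images of $\alpha=\alpha_f$ and to exploit that minimality gives the symmetry $\alpha(JX,Y)=\alpha(X,JY)$ for all $X,Y$ — the very condition making the associated family (\ref{oneparemeter}) well defined — so that, under the identification $T^{\perp}_fM\cong T^{\perp}_{\widehat f}M$, one has $\alpha_{\widehat f}(X,Y)=\alpha(JX,Y)$. With the splittings $T^{\perp}_fM=L\oplus L^{\perp}=\widehat L\oplus\widehat L^{\perp}$, the defining conditions read $\mathcal N(\alpha_{L^{\perp}})=\{X:\alpha(X,\cdot)\subseteq L\}$, $\mathcal N(\widehat\alpha_{\widehat L^{\perp}})=\{X:\alpha(JX,\cdot)\subseteq\widehat L\}$ and $\mathcal N(\alpha_{E^{\perp}})=\{X:\alpha(X,\cdot)\subseteq E\}$. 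Since a sum of $\tau$-invariant subspaces is again $\tau$-invariant, $E$ is the unique maximal one and contains every $\tau$-invariant subspace of $L$; moreover $\tau(E)=E\subseteq\widehat L$ forces $E\subseteq L\cap\widehat L$. I will also use that $L$ lies in the first normal space $N_1=\mathcal S(\alpha)$ coming from the construction of the triple in \cite{DFGenuines} (note $\mathcal S(\alpha_{\widehat f})=\mathcal S(\alpha)$, so the two first normal spaces agree under the identification).

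For the set equality, the inclusion $\mathcal N(\alpha_{E^{\perp}})\subseteq D$ is immediate: if $\alpha(X,\cdot)\subseteq E\subseteq L$ then $X\in\mathcal N(\alpha_{L^{\perp}})$, and since $E\subseteq\widehat L$ the symmetry gives $\alpha(JX,Y)=\alpha(X,JY)\in E\subseteq\widehat L$, so $X\in\mathcal N(\widehat\alpha_{\widehat L^{\perp}})$. For the reverse inclusion, fix $X\in D$ and set $\mathcal S_X=\{\alpha(X,Y):Y\in TM\}$. Then $\mathcal S_X\subseteq L$, while $\{\alpha(JX,Y):Y\}=\{\alpha(X,JY):Y\}=\mathcal S_X$ is contained in $\widehat L$, whence $\mathcal S_X\subseteq L\cap\widehat L$. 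Evaluating $(C_1)$, namely $\tau((\alpha(X,Y))_L)=(\alpha(JX,Y))_{\widehat L}$, at such $X$ (where $\alpha(X,Y)\in L$ and $\alpha(JX,Y)\in\widehat L$) yields $\tau\alpha(X,Y)=\alpha(X,JY)\in\mathcal S_X$. Thus $\tau(\mathcal S_X)\subseteq\mathcal S_X$, and since $\tau$ is an isometry $\tau(\mathcal S_X)=\mathcal S_X$; so $\mathcal S_X$ is a $\tau$-invariant subspace of $L$, giving $\mathcal S_X\subseteq E$ and $X\in\mathcal N(\alpha_{E^{\perp}})$. This establishes $D=\mathcal N(\alpha_{E^{\perp}})$.

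To see that $\dim E$ is even I will rule out real eigenvalues of the orthogonal map $\tau|_E$. Suppose $\tau\xi=\lambda\xi$ with $\xi\in E$ and $\lambda\in\{1,-1\}$; then also $\tau^{-1}\xi=\lambda\xi$. Pairing $(C_1)$ with $\xi\in L\cap\widehat L$ and using that $\tau$ is an isometry gives $\lambda\langle\alpha(X,Y),\xi\rangle=\langle\alpha(JX,Y),\xi\rangle$ for all $X,Y$. Writing $T_\xi(X,Y):=\langle\alpha(X,Y),\xi\rangle$, this reads $T_\xi(JX,Y)=\lambda\,T_\xi(X,Y)$; replacing $X$ by $JX$ gives $-T_\xi(X,Y)=T_\xi(J^2X,Y)=\lambda\,T_\xi(JX,Y)=\lambda^2T_\xi(X,Y)=T_\xi(X,Y)$, so $T_\xi\equiv0$. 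Hence $\xi$ is orthogonal to $\mathcal S(\alpha)=N_1$; since $\xi\in E\subseteq L\subseteq N_1$ this forces $\xi=0$. Therefore $\tau|_E$ has no real eigenvalue, and $\dim E$ is even.

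I expect the main obstacle to be the bookkeeping between the two splittings in the evaluation of $(C_1)$ — tracking which object lives in $L$, in $\widehat L$, or in their intersection, and using $\tau^{-1}\xi=\lambda\xi$ correctly — together with pinning down the structural input $L\subseteq N_1$. This last point is what converts ``$\xi\perp\mathcal S(\alpha)$'' into ``$\xi=0$'' and thereby drives the even-dimensionality; without it a spurious $\pm1$-eigenvector of $\tau$ lying outside the first normal space could not be excluded. The symmetry $\alpha(JX,Y)=\alpha(X,JY)$, which underlies both the relation $\tau\alpha(X,Y)=\alpha(X,JY)$ and the eigenvalue computation, should be recorded explicitly at the outset, as the whole argument rests on it.
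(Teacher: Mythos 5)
Your proof is correct, but for the substantive half of the lemma it takes a genuinely different (and shorter) route than the paper. For the inclusion $D\subseteq\mathcal N(\alpha_{E^{\perp}})$, the paper fixes $\xi\in E^{\perp}$, builds the iterated sequence $\xi_{i}^{V}$ of orthogonal projections of $\tau(\xi_{i-1}^{V})$ onto $V=L\ominus E$, shows the norms are non-increasing, and argues by a limit together with the maximality of $E$ that $A_\xi X=\pm JA_{\xi_i^V}X\to 0$. You bypass all of this by observing that for $X\in D$ the subspace $\mathcal S_X=\alpha(X,TM)$ lies in $L\cap\widehat L$ and is carried into itself by $\tau$ (via $(C_1)$ and pluriharmonicity), hence $\mathcal S_X\subseteq E$ by maximality; this is cleaner, avoids the limiting argument entirely, and isolates exactly where maximality enters. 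For the evenness, the paper simply cites the ``involution property'' of $\tau$ (in effect $\tau^2=-I$ on the relevant subbundle), while you rule out real eigenvalues of the orthogonal map $\tau|_E$ using the identity $\langle\alpha(JX,Y),\xi\rangle=\lambda\langle\alpha(X,Y),\xi\rangle$; this is more self-contained but hinges on the structural fact $E\subseteq L\subseteq\mathcal S(\alpha)$, which you correctly flag as an input from the construction of the triple in \cite{DFGenuines} (and which is consistent with the analogous construction in Section 4, where $L\subset S_0\subset S\subset\Gamma^{\perp}\subset\mathcal S(\alpha)$) --- it would be worth citing that containment explicitly rather than leaving it as an expectation. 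Both arguments rest on the same foundations you state at the outset: $\alpha_{\widehat f}(X,Y)=\alpha(JX,Y)$ under the identification of normal bundles, the pluriharmonicity $\alpha(JX,Y)=\alpha(X,JY)$, and condition $(C_1)$.
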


\begin{proof}
It follows directly from the involution propriety of $\tau$ that the dimension of $E$ is even. Since $E \subset L \cap \widehat{L},$ we have that $(L\cap \widehat{L})^{\perp} \subset E^{\perp}.$ So, it is clear that,  $\mathcal{N} (\alpha_{E^{\perp}}) \subset \mathcal{N} (\alpha_{L^{\perp}})\cap \mathcal{N} (\widehat{\alpha}_{\widehat{L}^{\perp}}).$ 
Consider the orthogonal decomposition $E^{\perp} =L^{\perp}\oplus V$, where $V$ is the orthogonal complement of $E$ in $L$. Take an unit vector $\xi \in E^{\perp}$ and define the following sequence $\xi \;=\; \xi_0^{V} +\xi_0^{L^{\perp}}$ and $\tau(\xi_i^{V})\;=\; \xi_{i+1}^{V} + \xi_{i+1}^{L^{\perp}}$, where  $\xi_{i+1}^{V}$ is the $V$-component and $ \xi_{i+1}^{L^{\perp}}$ is the $L^\perp$-component of $\tau(\xi_i^{V})$. 

Note that the norm of the sequence $\xi_{i}^{V}$ satisfies $$0\leq \ldots \leq  \|\xi_{i+1}^{V}\|\leq \|\xi_{i}^{V}\|\leq\ldots\leq\|\xi_{0}^{V}\|\leq 1.$$ Therefore, any convergent subsequence of $\xi^{V}_i$ converges to a vector with norm equal to $\lim_{i\to\infty}\|\xi_{i}^{V}\|$. Then, up to a subsequence, $\xi^{V}_{i}$ converges to $w\in V$ and so $\tau (\xi^{V}_{i}) = \xi ^{V}_{i+1}+\xi ^{L^{\perp}}_{i+1}$ converges to $w' \in V$. In fact, since $\tau$ is a isometry and $\xi^{V}_{i}, \xi ^{V}_{i+1}$ converges to vectors of the same norm then the $L^{\perp}$ component of $\tau (\xi^{V}_{i})$ converges to $0$. Then it is easy to see that $w=w'=0$, otherwise $E\oplus \Span\{w,w'\}$ is invariant by $\tau$ which contradicts the maximality of $E$.

Let $X \in \mathcal{N} (\alpha_{L^{\perp}})\cap \mathcal{N} (\widehat{\alpha}_{\widehat{L}^{\perp}})$ be an unit vector. By definition of the sequence we have that $$A_{\xi}X =A_{\xi_0}X =-JA_{\tau\xi_0^{V}}X =  -JA_{\xi_1^{V}}X= A_{\tau\xi_1^{V}}X = \ldots
= \pm -JA_{\xi_i^{V}}X,$$ for all $i \in \mathbb{N}$. By taking the limit it follows that $A_{\xi}X = 0$ for all $\xi \in E^{\perp}$ and $X \in \mathcal{N} (\alpha_{L^{\perp}})\cap \mathcal{N} (\widehat{\alpha}_{\widehat{L}^{\perp}})$ and therefore $\mathcal{N} (\alpha_{L^{\perp}})\cap \mathcal{N} (\widehat{\alpha}_{\widehat{L}^{\perp}})\subset\mathcal{N} (\alpha_{E^{\perp}})$ which finishes the proof of the lemma.

\end{proof}

\begin{remark}\label{remark1}
Note that the maximal invariant $E$ is given by $L \cap \tau(L)$ and, in particular, it has dimension at least $2l-p.$
\end{remark}

Lemma \ref{lemma1} implies that $(\overline{\tau} = \tau|_E ,E^{2k}, D^d)$ also satisfies the conditions $(C1)$ and $(C2)$ in (\ref{ces}). Then by Proposition 9 in \cite{DFGenuines} we have an isometric extension $F_{\Lambda,f}$ (possibly trivial), where $\Lambda$ is given by the orthogonal splitting $\mathcal{N}(\phi_{\overline{\tau}}) = D \oplus \Lambda$ and is trasversal to $TM$. Note that $J\oplus\overline{\tau}  :  TM\oplus E\to TM \oplus E$ is an isometry and parallel in the induced connection. Thus, if $f$ extend isometrically then the extension is K\"ahler. Otherwise, $f$ is $D^d$-ruled and from remark $20$ in \cite{DFGenuines} we have $d \geq 2n -2p +2l$. Moreover, if $E=L$ and $p \leq 6$ (See Remark \ref{remark2}) it follows from Theorem $14$ in \cite{DFGenuines} a better estimative, namely, $d \geq 2n -2p +3l$.

\begin{remark}\label{remark2}
Theorem $14$ in \cite{DFGenuines} and Lemma \ref{lemma1} implies that if $p=6$ and $l = 0$ then $D^d$ is the relative nullity and $d \geq 2n -2p -1$. Furthermore, since $D^d$ is $J$-invariant then $d$ is even and so, in fact, $d \geq 2n -2p$.
\end{remark}

The discussion above yields the following proposition.

\begin{pro}\label{pps1}
Let $f: M^{2n} \to \R^{2n+p}$ be a real K\"ahler minimal submanifold with $p \leq 6$ and $\widehat{f} = f_{\pi/2}: M^{2n}\to \R^{2n+p}$ its conjugate. Then, the pair $\{f, \widehat{f}\}$ are either mutually $D^{d}$-ruled with $d\geq 2n-2p+2l$ or extends to a real K\"ahler submanifold (locally). Moreover, if $E=L$ and the pair $\{f, \widehat{f}\}$ does not extend, then $d \geq 2n-2p+3l.$
\end{pro}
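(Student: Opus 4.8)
The plan is to assemble Proposition~\ref{pps1} directly from the machinery collected in this section, treating it essentially as a translation of the genuine-rigidity dichotomy into the real K\"ahler setting. First I would recall that, by the discussion preceding Lemma~\ref{lemma1}, the minimality of $f$ furnishes the conjugate immersion $\widehat{f} = f_{\pi/2}$ together with the triple $(\tau, L^l, D^d)$ satisfying $(C_1)$ and $(C_2)$ of (\ref{ces}). The goal is then to run the standard alternative from \cite{DFGenuines} (either the pair extends isometrically, or it is mutually $D^d$-ruled) while upgrading it to the K\"ahler category and inserting the improved dimension bounds.

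The key steps, in order, are as follows. I would pass from $(\tau, L^l, D^d)$ to the maximal $\tau$-invariant subbundle $E^{2k} \subset L$ and invoke Lemma~\ref{lemma1}, which identifies $D = \mathcal{N}(\alpha_{E^\perp})$ and shows that $(\overline{\tau} = \tau|_E, E^{2k}, D^d)$ again satisfies $(C_1)$ and $(C_2)$. The crucial observation is that $J \oplus \overline{\tau} \colon TM \oplus E \to TM \oplus E$ is a parallel bundle isometry, so the almost complex structure of $M$ propagates to any extension built from $\mathcal{N}(\phi_{\overline{\tau}}) = D \oplus \Lambda$; by Proposition~\ref{newProp9} this guarantees that whenever $F_{\Lambda,f}$ is a genuine (nontrivial) extension, it is automatically a real K\"ahler extension. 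This disposes of the extension horn of the dichotomy. For the other horn, if no such extension exists, then Proposition~9 in \cite{DFGenuines} forces $f$ and $\widehat{f}$ to be mutually $D^d$-ruled, and Remark~20 of \cite{DFGenuines} yields the estimate $d \geq 2n - 2p + 2l$. The final ``moreover'' clause is the refined case: when $E = L$ and $p \leq 6$, Theorem~14 of \cite{DFGenuines} applies (its hypotheses being met precisely because the $\tau$-invariance is total, so the obstruction to ruling is governed by the stronger rank count), improving the bound to $d \geq 2n - 2p + 3l$.

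The step I expect to be the main obstacle is the verification that the general genuine-rigidity statements of \cite{DFGenuines}, phrased for an arbitrary pair of isometric immersions, transfer \emph{verbatim} to the pair $\{f, \widehat{f}\}$ with the extra K\"ahler structure intact --- in particular that the ruling distribution $D^d$ produced by Proposition~9 is $J$-invariant and that the extension inherits a complex structure rather than merely a Riemannian one. This is exactly what Proposition~\ref{newProp9} is designed to handle, so my plan is to lean on it rather than reprove it: the content is that $\Lambda$ is $\overline{\tau}$-invariant and that $(J \oplus \overline{\tau})|_\Lambda$ is the parallel almost complex structure of the extension. A secondary subtlety is the bookkeeping on the numerical bounds, where one must track the rank $l$ of $L$ (respectively $2k$ of $E$) through Remark~20 and Theorem~14; here I would simply cite the relevant estimates after checking that the codimension hypothesis $p \leq 6$ places us within their range of validity, as already noted in Remark~\ref{remark2} for the boundary case $l = 0$.

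Assembling these pieces gives the two stated alternatives, with the sharper bound in the totally invariant case, completing the proof.
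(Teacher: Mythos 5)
Your proposal follows the paper's own argument essentially verbatim: pass to the maximal $\tau$-invariant subbundle $E^{2k}$ via Lemma~\ref{lemma1}, apply Proposition~9 of \cite{DFGenuines} to the triple $(\overline{\tau}, E^{2k}, D^d)$, use the parallel isometry $J\oplus\overline{\tau}$ (as in Proposition~\ref{newProp9}) to upgrade the extension to a K\"ahler one, and obtain the ruling bounds from Remark~20 and Theorem~14 of \cite{DFGenuines}. This is exactly the discussion the paper gives immediately before stating Proposition~\ref{pps1}, so the proposal is correct and takes the same route.
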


\noindent We now give a complete proof of Theorem A using the triple $(\tau, E^{2k},D^d)$ above.

\begin{proof}[Proof of theorem A]
It follows from Proposition \ref{pps1} that, along each connected component of an open dense subset of $M^{2n}$, either $f$ and $\widehat{f}$ extend isometrically to a real K\"ahler submanifold, or $\{f,\widehat{f}\}$ are mutually $d$-ruled, with $d \geq 2n - 2p + 2l$. If the first case occurs the conclusion of the theorem is immediate. So we assume that $\{f,\widehat{f}\}$ are mutually $D^{d}$-ruled.

Since $D^d$ is an asymptotic space we have that $A_\xi|_D (D) \subset  D^{\perp}$ for any $\xi \in T_f^{\perp}M$ and therefore \begin{equation}\label{eq:assymptoticShape}\dim (\ker A_\xi \cap D)\geq 2n+4l-4p,\end{equation} for all $\xi \in T_f^{\perp}M$. Also, from condition $(C_1)$ we have that $\Ker A_{\xi} = \Ker A_{\tau\xi}$ for all $\xi \in E^{2k}$, where $E^{2k}$ is the maximal invariant space of $\tau$ as in Lemma \ref{lemma1}. Thus, the rellative nullity $\Delta$ of $f$ is the intersection of $D^d$ and $\bigcap_{i=1}^{k}\Ker A_{\xi_i}$ for some basis $\{\xi_1,\cdots, \xi_k,$ $\tau\xi_1, \cdots \tau\xi_k\}$ of $E^{2k}$. Combining the inequalities \eqref{eq:assymptoticShape} for $\xi_1,\cdots,\xi_k$, we have that $$\nu_f \geq d - k(2p-2l) \geq (2n-2p)+2l(k+1)-2pk.$$ Thus, the theorem follows if we prove that $$l(k+1)-pk \geq 0.$$ This easily holds for $p \leq 4$. For $p \in \{5,6\}$ the inequality fails if $l = 2k = 2$. In this case we can apply the estimate $d \geq 2n -2p +3l$ since $L=E$ (see remark \ref{remark2}), and conclude in an analogous way that $$\nu_f \geq d - k(2p-3l) \geq (2n-2p)+3l(k+1)-2pk \geq 2n -2p.$$
\end{proof}

\begin{remark}
If the real K\"ahler submanifold in Theorem A is complete then the rules obtained by Theorem \ref{GenuineKahler} are holomorphic distributions and they are naturally related with the appearance of the complex ruled submanifolds as in \cite{DRMinimalKahler} and \cite{FZKaehlerCylinder} (see \cite{Base} for more details).
\end{remark}






\section{Non-minimal case}\label{SecNonMinimal}
For the non-minimal case we are not able to apply directly the main result in \cite{DFGenuines}. However we are still able to obtain the whole structure of bundles constructed there.

Let $f:M^{2n} \rightarrow \R^{2n+p}$ be a real K\"ahler submanifold with second fundamental form $\alpha$ and endow the vector bundle $T^{\perp}_fM \oplus T^{\perp}_fM$ with the indefinite metric of type $(p,p)$ given by $$\left< \left<\, ,\, \right> \right>_{T_f^{\perp}M\oplus{T_f^{\perp}M}} = \left<\, ,\,\right>_{\pi_1T_f^{\perp}M} - \left<\,,\,\right>_{\pi_2T_f^{\perp}M},$$ where $\pi_1$ and $\pi_2$ are the natural projections of $T^{\perp}_fM \oplus T^{\perp}_fM$. Set $$\beta=\alpha(\cdot,\cdot)\oplus\alpha(J\cdot,\cdot):TM \times TM \rightarrow \mathcal{S}(\alpha)\oplus \mathcal{S}(\alpha),$$ and let $\Omega \subset \mathcal{S}(\alpha) \oplus \mathcal{S}(\alpha) = \mathcal{S}(\alpha) \oplus \mathcal{S}(\alpha(J\cdot,\cdot))$ be the vector bundle with null fibers $\Omega = \mathcal{S}(\beta) \cap \mathcal{S}(\beta)^{\perp}$. Observe that $\Omega$ is an even dimensional vector bundle by the almost complex structure $J$. Indeed, if $\beta(X,Y) \in \Omega$ and is not zero for some $X,Y\in TM$ then $\beta(JX,Y) \in \Omega$ and $\beta(X,Y), \beta(JX,Y)$ are linearly independents. Accordingly, there is an orthogonal splitting $$\mathcal{S}(\alpha) = \Gamma \oplus \Gamma^{\perp}$$ where $\Gamma = \mathcal{S}(\alpha) \cap \Omega^{\perp}$, and an isometry $\mathcal{T}: \Gamma^{\perp} \rightarrow \Gamma^{\perp}$ such that $$\Omega = \{(\eta, \mathcal{T}\eta): \eta \in \Gamma^{\perp}\} \subset \Gamma^{\perp}\oplus \Gamma^{\perp}$$ and $\alpha_{\Gamma^{\perp}}(\cdot,\cdot) =\mathcal{T} \circ  \alpha_{\Gamma^{\perp}}(J\cdot,\cdot)$. Furthermore, $$\mathcal{T}^2 \circ \alpha_{\Gamma^{\perp}}(X,Y) = \mathcal{T} \circ \alpha_{\Gamma^{\perp}}(JX,Y) = -\alpha_{\Gamma^{\perp}}(X,Y).$$ In particular, $\mathcal{T}^2 = - I$.

Define $\beta_{\Gamma}: TM \times TM \rightarrow \Gamma \oplus \Gamma$ as the projection of $\beta$ over ${\Gamma}$ and a vector subbundle $\Theta \subset TM$ by $\Theta = \mathcal{N}(\beta_{\Gamma})$. The vector subbundle $S \subset \Gamma^{\perp}$ defined by $$S = \mathcal{S}(\alpha|_{\Theta \times TM})$$ is preserved by $\mathcal{T}$, by the $J$-invariance of $\Theta$, and satisfies $\Theta = \mathcal{N}(\alpha_{S^{\perp}}) \cap \mathcal{N}(\alpha_{S^{\perp}}(J\cdot,\cdot))$. Now define a vector subbundle $S_0 \subset S$ by $$S_0 = \bigcap_{X \in TM} \Ker \mathcal{K}(X),$$ where $\mathcal{K}(X) \in \Lambda^2(S)$, $X \in TM$ denotes the skew-symmetric tensor given by $$\mathcal{K}(X){\eta} = (\nabla^{\perp}_X\eta)_S - (\nabla^{\perp}_X\tau\eta)_S.$$ Then we define vector subbundles $L^l \subset S_0$ and $D^d \subset \Theta$ by $$L^l = \{\delta \in S_0 : \nabla^{\perp}_Y \delta \in S \,\, \text{and } \nabla^{\perp}_Y \mathcal{T}\delta \in S \,\, \text{for all } Y\in \Theta\}$$ and $D^d = \mathcal{N}(\alpha_{L^{\perp}}) \cap \mathcal{N}(\alpha_{L^{\perp}}(J\cdot,\cdot))$, and let $\tau: L^l \rightarrow L^{l}$ be the induced vector bundle isometry given by $ \tau = \mathcal{T}|_{L^l}$. In particular, $\tau^2 = -I$, where $I$ is the identity map and $l = 2k$ for some integer $k\geq0$. We can now state an K\"ahler version of Theorem 11 in \cite{DFGenuines}.
\begin{theorem}
Let $f:M^{2n} \to \R^{2n+p}$ be a real K\"ahler submanifold. Then along each connected component of an open dense subset of $M^{2n}$ the triple $(\tau, L^l, D^d)$, satisfies $(C_1)$ and $(C_2)$ in (\ref{ces}). In particular, $f$ has an isometric K\"ahler ruled extension $F:N^{2n+2k} \rightarrow \R^{2n+p}$ (possibly trivial) satisfying the conclusions of Proposition \ref{newProp9}. Moreover, if the extension is trivial then $f$ is $D^d$-ruled.
\end{theorem}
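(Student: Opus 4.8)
The plan is to follow the strategy of Theorem 11 in \cite{DFGenuines}, replacing the genuine pair $\{f,\widehat f\}$ by the single immersion $f$ together with the ``doubled'' bilinear form $\beta=\alpha\oplus\alpha(J\cdot,\cdot)$ on the $(p,p)$-space $T_f^\perp M\oplus T_f^\perp M$. The algebraic engine of the whole construction is the \emph{flatness of $\beta$}: writing out the defect (\ref{defFlat}) for $\beta$ and using the $(p,p)$-signature, one finds, via the Gauss equation, that this defect equals $\langle R(X,Y)Z,W\rangle-\langle R(JX,JY)Z,W\rangle$, which vanishes by the Kähler identity $R(JX,JY)=R(X,Y)$ (equivalently, relations (\ref{eq2.1})--(\ref{eq2.3})). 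First I would restrict to a connected component of the open dense subset on which all the objects built from $\alpha$ --- namely $\mathcal S(\alpha)$, $\Omega=\mathcal S(\beta)\cap\mathcal S(\beta)^\perp$, $\Gamma$, $\Gamma^\perp$, $\Theta=\mathcal N(\beta_\Gamma)$, $S$, $S_0$, $L$ and $D$ --- have locally constant dimension, so that they are smooth subbundles; this is where the ``open dense'' hypothesis is spent. On such a component the nullity of $\beta$ produces the null bundle $\Omega$, and flatness forces $\Omega$ to project isomorphically onto $\Gamma^\perp$ in each factor, yielding the isometry $\mathcal T$ with $\mathcal T^2=-I$ and the structural identity $\alpha_{\Gamma^\perp}=\mathcal T\circ\alpha_{\Gamma^\perp}(J\cdot,\cdot)$ exactly as in the setup.

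The heart of the argument is the verification of $(C_1)$, i.e. the flatness of $\phi_\tau$ from (\ref{eq1}), which by the remark following (\ref{eq1}) is equivalent to $\tau$ being parallel and preserving second fundamental forms. The preservation of second fundamental forms, $\tau\circ\alpha_f=\alpha_{\widehat f}$, is already encoded in $\alpha_{\Gamma^\perp}=\mathcal T\circ\alpha_{\Gamma^\perp}(J\cdot,\cdot)$ restricted to $L\subset\Gamma^\perp$. For the parallelism I would differentiate this identity and combine the Codazzi equation of $f$ with the Kähler symmetry to show that the tensor $\mathcal K(X)$ measuring the failure of $\mathcal T$ to be parallel is a genuine skew-symmetric tensor valued in $\Lambda^2(S)$; this is precisely why $S_0=\bigcap_X\Ker\mathcal K(X)$ is the locus where $\mathcal T$ becomes parallel \emph{modulo} $S$. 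The further defining conditions of $L^l\subset S_0$ (that $\nabla^\perp_Y\delta$ and $\nabla^\perp_Y\mathcal T\delta$ remain in $S$ for $Y\in\Theta$) then upgrade this to honest parallelism of $\tau=\mathcal T|_L$ in the normal connection, giving $(C_1)$ on $L$.

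For $(C_2)$ I would use that $D^d\subset\Theta$, so that the two conditions defining $L$ apply with $Y\in D$; a closure (bootstrapping) argument, identical in spirit to Lemma 8 of \cite{DFGenuines}, then shows that $\nabla^\perp_Y\delta$ and $\nabla^\perp_Y\mathcal T\delta$ actually lie back in $L$ for $\delta\in L$ and $Y\in D$, i.e. $L$ and $\tau(L)$ are parallel along $D$. Having established $(C_1)$ and $(C_2)$ for the triple $(\tau,L^l,D^d)$, the extension statement is immediate: the identity $\mathcal N(\phi_{\mathcal T})\cap TM=D$ together with the $J$-invariance of $D^d$ (built into its definition $\mathcal N(\alpha_{L^\perp})\cap\mathcal N(\alpha_{L^\perp}(J\cdot,\cdot))$, which makes $J\oplus\tau$ a parallel complex structure on $\Lambda$) place us exactly in the hypotheses of Proposition \ref{newProp9}, producing the $\mathcal N(\phi_{\mathcal T})$-ruled real Kähler extension $F:N^{2n+2k}\to\R^{2n+p}$; when $\Lambda$ is trivial the same proposition forces $f$ to be $D^d$-ruled.

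I expect the main obstacle to be the verification of $(C_1)$ --- specifically, showing that the correction tensor $\mathcal K(X)$ is skew-symmetric and that the subbundles $S_0$ and $L$, defined purely algebraically and differentially, are genuinely the maximal locus on which $\tau$ is parallel. This is the point where the Codazzi equation, the Kähler symmetries (\ref{eq2.1})--(\ref{eq2.3}), and the $J$-invariance of $\Theta$ and $S$ must be combined delicately, and it is the step that mirrors the technical core of Theorem 11 in \cite{DFGenuines}, now carried out without the genuine conjugate immersion available in the minimal case.
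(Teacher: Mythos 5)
Your proposal is correct and follows essentially the same route as the paper: the paper's proof simply defers to Theorem 11 of \cite{DFGenuines}, noting exactly the two points you elaborate, namely that $\beta$ need not be symmetric and that the key flatness lemma (Lemma 12 there) survives because the defect reduces via the Gauss equation to $\langle R(X,Y)Z,W\rangle-\langle R(JX,JY)Z,W\rangle=0$. Your expanded verification of $(C_1)$, $(C_2)$ and the appeal to Proposition \ref{newProp9} is precisely the content the paper leaves implicit.
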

\begin{proof}
The proof is the same as the proof of Theorem 11 in \cite{DFGenuines} once you notice that there is no need for $\beta$ to be symmetric and Lemma 12 in \cite{DFGenuines} holds if we use the following propriety $R(X,Y) = R(JX,JY)$ for all $X,Y \in TM$.
\end{proof}

Observe that, at first, we do not have any estimate for the dimension of $D^d$ and the theorem above holds for any codimension (the subbundles can be trivial). Even though we do not have the conjugated isometric immersion as in Theorem A, we have that the flat bilinear form $\beta$ is special. The reason is that the only lemma that requires symmetry in \cite{DFGenuines} holds for $\beta$, more precisely, we have the following algebraic lemma. In this lemma we will use the nomenclature of kernel and image ($\ker$ and $\Im1$) instead of nullity and span ($\mathcal{N}$ and $\mathcal{S}$) to follow the same notation of \cite{YZKahlerCod4}.


\begin{lemmaA}

Let $V\simeq \mathbb{C}^{n}$ and $N\simeq \R^p$ be equipped with inner products, and let $H$ and $S$ be (respectively) Hermitian and symmetric bilinear forms from $V$ into $N_{\mathbb{C}} = N \otimes \mathbb{C}$ satisfying symmetry conditions (\ref{eq2.1})-(\ref{eq2.3}). Then, there exist a subspace $E$ of $(\Im1 H)^{\perp}\subset N$ (possibly trivial) with real dimension $2k$, and an isometry $\tau$ of $E$ onto itself such that $\tau^{2} = -I$ and $\langle S(\cdot,\cdot),\tau\eta\rangle =-i \langle S(\cdot,\cdot),\eta\rangle$ and $\dim_{\mathbb{C}} \ker H \cap \ker S_{E^{\perp}}\geq n-p+k.$
\end{lemmaA}

\begin{proof}
When $\Im1 H = \{0\}$, the lemma follows from Theorem 3 in \cite{DFCompositions} since $S$ is a flat symmetric bilinear form. In fact, this theorem say that $\langle S(\cdot,\cdot),\tau\eta\rangle =-i \langle S(\cdot,\cdot),\eta\rangle$ for all $\eta \in E$ and $\dim_{\mathbb{C}} \ker S_{E^{\perp}}\geq n-p+k$ where $E \otimes \mathbb{C}$ is the null part of $S$. Notice that if $k=0$ we actually have the inequality by the fact that $E$ is even dimensional (for $p=6$, as discussed in Theorem A). 

Since $H$ is Hermitian, its image space $\Im1 H$  is a tensor product of some real linear subspace of $N$ which we denote also by $\Im1 H$ and $\mathbb{C}$. Let $N_{\mathbb{C}} = (\Im1 H \oplus \Im1 H^{\perp}) \otimes \mathbb{C}$ be the orthogonal (possibly trivial) decomposition and write $S = (S', S'')$ under this decomposition. Suppose that $\dim \Im1 H =1$, in this case it is easy to see that $S'$ is flat by equation (\ref{eq2.2}). Since $S$ is already flat, it follows that $S''$ is also flat. Thus, applying Theorem 3 in \cite{DFCompositions} for $S''$ the lemma follows as above.

We can assume now that $6 \geq \dim \Im1 H \geq 2$. By equation (\ref{eq2.2}) we have that $S(\ker H, V)\subset(\Im1 H)^{\perp} \otimes \mathbb{C}$. Define the flat bilinear form $\overline{S}$ by $$\overline{S}=S|_{\ker H \times V}: \ker H \times V \to (\Im1 H)^{\perp}\otimes \mathbb{C}.$$ The lemma will follows from Lemma 7 in \cite{FHZKahler} and the following claim.
\begin{claim}  $(\Im1 H)^{\perp}$ possesses a direct sum decomposition $(\Im1 H)^{\perp} = E\oplus E^{\perp}$, such that $\overline{S}_{E}$ is null and $\overline{S}_{E^{\perp}}$ is flat and satisfies $\dim \Ker(\overline{S}_{E^{\perp}})\geq \dim \ker H -\dim E^{\perp}$.
\end{claim}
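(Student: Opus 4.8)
The plan is to treat $\overline{S}=S|_{\ker H\times V}$ as a flat bilinear form into the complex space $(\Im1 H)^{\perp}\otimes\mathbb{C}$, equipped with the complex-bilinear extension $b$ of the positive-definite inner product of $N$. Realifying $(\Im1 H)^{\perp}\otimes\mathbb{C}$, this extension has neutral signature $W^{r,r}$ with $r=\dim_{\R}(\Im1 H)^{\perp}=p-\dim\Im1 H\le 4$, since $\dim\Im1 H\ge 2$ and $p\le 6$. The point to keep in mind throughout is that $E$ and $E^{\perp}$ must be orthogonal with respect to the \emph{positive-definite} real metric of $N$, so that $\overline{S}=\overline{S}_E\oplus\overline{S}_{E^{\perp}}$ is a $b$-orthogonal splitting and both $\pi_{E\otimes\mathbb{C}}$ and $\pi_{E^{\perp}\otimes\mathbb{C}}$ are genuine orthogonal projections.

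First I would record a free reduction: once $E$ is produced with $\overline{S}_E$ null, i.e.\ $\mathcal{S}(\overline{S}_E)$ a $b$-null subspace of $E\otimes\mathbb{C}$, the flatness of $\overline{S}_{E^{\perp}}$ comes for nothing. Indeed, expanding the flatness identity of $\overline{S}$ along the orthogonal splitting gives
\[
\langle\overline{S}(X,Z),\overline{S}(Y,W)\rangle=\langle\overline{S}_E(X,Z),\overline{S}_E(Y,W)\rangle+\langle\overline{S}_{E^{\perp}}(X,Z),\overline{S}_{E^{\perp}}(Y,W)\rangle,
\]
and the first term on the right vanishes identically because $\mathcal{S}(\overline{S}_E)$ is null; hence the flatness identity for $\overline{S}$ collapses exactly to the flatness identity for $\overline{S}_{E^{\perp}}$. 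So the only genuine work is to construct $E$ with $\overline{S}_E$ null and with $\mathcal{S}(\overline{S}_{E^{\perp}})$ nondegenerate.

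To construct $E$ I would begin from the null part of the span, $\Omega'=\mathcal{S}(\overline{S})\cap\mathcal{S}(\overline{S})^{\perp}$, and split off a $b$-nondegenerate complement to be carried by $E^{\perp}$. Here I would exploit the conjugation (real structure) of $(\Im1 H)^{\perp}\otimes\mathbb{C}$: it interchanges null subspaces, and since the underlying real metric is positive definite, no nonzero conjugation-invariant subspace is $b$-null, so $\Omega'\cap\overline{\Omega'}=0$ and $\Omega'\oplus\overline{\Omega'}$ is conjugation-invariant, nondegenerate, and therefore the complexification of a real subspace of even dimension $2k$. Choosing $E$ as a real, $g$-orthogonal subspace adapted to $\Omega'$, and defining $\tau=\mathcal{T}|_E$ as multiplication by $i$ transported through $b$ (so that $\tau^2=-I$ and $\langle S(\cdot,\cdot),\tau\eta\rangle=-i\langle S(\cdot,\cdot),\eta\rangle$), one arranges $\overline{S}_E$ null and $\overline{S}_{E^{\perp}}$ with nondegenerate span. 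The main obstacle is precisely this step: because $E^{\perp}$ is the orthogonal complement for the positive-definite metric $g$ of $N$, while the null/nondegenerate dichotomy lives with the bilinear extension $b$, the $g$-orthogonal and $b$-orthogonal projections do not coincide, and one cannot simply set $E\otimes\mathbb{C}=\Omega'\oplus\overline{\Omega'}$. Reconciling the two will require a finite case analysis on $\dim\Im1 H\in\{2,3,4\}$ (equivalently $r\in\{4,3,2\}$), using a left regular element $X_0\in\ker H$ of $\overline{S}$ together with the fundamental lemma of flat bilinear forms to bound $\dim\overline{S}^{X_0}(V)$ and locate the degenerate directions; this is exactly where the hypothesis $p\le 6$ is decisive.

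Finally, with $E^{\perp}$ carrying the flat form $\overline{S}_{E^{\perp}}$ of nondegenerate span into $E^{\perp}\otimes\mathbb{C}\cong W^{m,m}$, $m=\dim E^{\perp}$, I would invoke Lemma 7 in \cite{FHZKahler}, the nullity estimate for flat bilinear forms into a neutral space, applied to $\overline{S}_{E^{\perp}}$ on $\ker H\times V$, to conclude
\[
\dim_{\mathbb{C}}\ker(\overline{S}_{E^{\perp}})\ge\dim_{\mathbb{C}}\ker H-m=\dim\ker H-\dim E^{\perp},
\]
which is the asserted inequality and completes the claim.
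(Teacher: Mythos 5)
Your proposal gets the two easy ends of the argument right: the observation that flatness of $\overline{S}_{E^{\perp}}$ comes for free once $\overline{S}_{E}$ is null (the cross terms in the flatness identity vanish), and the final application of Lemma 7 of \cite{FHZKahler} to the nondegenerate part to obtain $\dim\Ker(\overline{S}_{E^{\perp}})\geq\dim\ker H-\dim E^{\perp}$. Both points also underlie the paper's argument. But the entire content of the Claim is the middle step, which you explicitly defer: producing a real subspace $E\subset(\Im1 H)^{\perp}$ whose complexification absorbs the degenerate directions of $\mathcal{S}(\overline{S})$, so that $\overline{S}_{E}$ is null while $\mathcal{S}(\overline{S}_{E^{\perp}})$ is nondegenerate. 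You correctly diagnose the obstruction --- the null part $\Omega'=\mathcal{S}(\overline{S})\cap\mathcal{S}(\overline{S})^{\perp}$ is not conjugation-invariant, so one cannot simply take $E\otimes\mathbb{C}=\Omega'\oplus\overline{\Omega'}$ and expect the orthogonal projection onto it to stay null --- but you then write that reconciling the positive-definite and complex-bilinear structures ``will require a finite case analysis'' without carrying any of it out. That is a genuine gap, not a routine verification: nothing in your text establishes that such an $E$ exists, and this existence is precisely what the Claim asserts.

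For comparison, the paper closes exactly this gap with a dichotomy built on a regular element $X\in\ker H$ of $\overline{S}$. Under the standing reduction $\dim_{\mathbb{C}}\ker\overline{S}<n-p$, Fact 1 shows the metric restricted to $\overline{S}^{X}(V)$ must be degenerate (otherwise $\ker\overline{S}^{X}=\ker\overline{S}$ would be too large). Then either $\overline{S}^{X}(V)$ is null for every regular $X$, in which case density of regular elements and polarization via flatness make the whole span $\mathcal{S}(\overline{S})$ null and one takes $E=(\Im1 H)^{\perp}$ (Fact 2); or some $U^{X}=\overline{S}^{X}(V)\cap\bigl(\overline{S}^{X}(V)\bigr)^{\perp}$ is a proper nonzero degenerate subspace, whose complex dimension is pinned to $2$ by the codimension restriction $p\leq 6$ together with the almost complex structure, and which flatness then identifies with $\mathcal{S}(\overline{S})\cap\mathcal{S}(\overline{S})^{\perp}$; this furnishes the required $E$ and the nondegeneracy of the complementary projection. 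This regular-element dichotomy is the missing argument in your write-up; as it stands, your proposal is a correct outline whose decisive construction is left as an unproved assertion.
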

In fact, since $\dim_{\mathbb{C}}\Ker H \geq n-\dim_{\mathbb{C}} \Im1 H$, by Lemma 7 in \cite{FHZKahler}, it follows from Claim that $\dim_{\mathbb{C}} \overline{S}_{E^\perp} \geq \dim_{\mathbb{C}} \ker H -p + (\dim_{\mathbb{C}} \Im1 H + k)\geq n-p+k$ and $E$ is trivial.

Now we will prove the claim. We can assume that $\dim_{\mathbb{C}} \ker{\overline{S}} < n-p$. Otherwise, by Lemma 7 in \cite{FHZKahler}, it follows that $\dim_{\mathbb{C}} \Ker H \cap \Ker S \geq n - p$ and there is nothing to do.
\begin{fact}
If $X \in \ker H$ is a regular element, then the restriction of $\langle ,\rangle$ to $\overline{S}^X(V) = \Im1 \overline{S}(X,\cdot)$ is degenerate.
\end{fact}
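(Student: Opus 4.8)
The plan is to argue by contradiction: assume the inner product is nondegenerate on $\mathcal{S}_0 := \overline{S}^X(V)$ and produce a nullity for $\overline{S}$ that is too large for the standing hypothesis. Write $K = \ker \overline{S}^X = \{Z \in V : S(X,Z) = 0\}$, so that $\mathcal{S}_0 \cong V/K$ and $\dim_{\mathbb{C}} K = n - \dim_{\mathbb{C}} \mathcal{S}_0$. Note first that under the hypothesis $\dim_{\mathbb{C}}\ker\overline{S} < n-p$ a regular element has $\mathcal{S}_0 \neq 0$, since otherwise $\overline{S}\equiv 0$ and its nullity would already be everything; so ``degenerate'' is a genuine assertion.

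The first ingredient is flatness. Applying the symmetry relation (\ref{eq2.3}) with the regular element $X$ in one slot, for every $Z \in K$, $Y \in V$ and $W \in V$ one has $\langle S(Y,Z), S(X,W)\rangle = \langle S(X,Z), S(Y,W)\rangle = 0$, because $S(X,Z) = 0$. Hence $S(Y,Z) \perp \mathcal{S}_0$ for all $Y \in V$ and $Z \in K$; in particular $S(\ker H, K) \subseteq \mathcal{S}_0^{\perp}$, and by (\ref{eq2.2}) this image already lies in $(\Im1 H)^{\perp}\otimes\mathbb{C}$.

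The second ingredient, which I expect to be the crux, is the regular-element property of flat bilinear forms: since $X$ maximizes $\dim \overline{S}^X(V)$ over $\ker H$, one has $\overline{S}(Y,Z) \in \mathcal{S}_0$ for every $Y \in \ker H$ and $Z \in K$. This follows from a short perturbation argument: for $Y \in \ker H$ we have $X+tY \in \ker H$, so by regularity the rank of $\overline{S}^{X+tY}$ is at most that of $\overline{S}^X$ and, by lower semicontinuity, equals it for small $t$; thus the planes $\overline{S}^{X+tY}(V)$ converge to $\mathcal{S}_0$. Since $\overline{S}^{X+tY}(Z) = t\,\overline{S}(Y,Z) \in \overline{S}^{X+tY}(V)$ for $Z \in K$, the fixed vector $\overline{S}(Y,Z)$ lies in every plane $\overline{S}^{X+tY}(V)$ with $t\neq 0$ small, hence in the limit $\mathcal{S}_0$.

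Finally I would combine the two inclusions. Under the nondegeneracy assumption $\mathcal{S}_0 \cap \mathcal{S}_0^{\perp} = 0$, the relations $S(\ker H, K) \subseteq \mathcal{S}_0^{\perp}$ and $S(\ker H, K) \subseteq \mathcal{S}_0$ force $S(Y,Z) = 0$ for all $Y \in \ker H$ and $Z \in K$, so $K$ is contained in the nullity of $\overline{S}$. Since $\mathcal{S}_0 \subseteq (\Im1 H)^{\perp}\otimes \mathbb{C}$ and $\dim_{\mathbb{C}}\big((\Im1 H)^{\perp}\otimes\mathbb{C}\big) = p - \dim_{\mathbb{C}}\Im1 H \leq p-2$ in the case $2\leq \dim \Im1 H \leq 6$ under consideration, we obtain $\dim_{\mathbb{C}} K = n - \dim_{\mathbb{C}}\mathcal{S}_0 \geq n - p + 2 > n - p$, contradicting the standing hypothesis $\dim_{\mathbb{C}}\ker\overline{S} < n-p$. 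Therefore $\mathcal{S}_0$ must be degenerate. The only delicate points to track carefully are the justification of the regular-element property above and the bookkeeping of which nullity is being bounded and in what (complex) dimension.
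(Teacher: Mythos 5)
Your argument is correct and follows essentially the same route as the paper: assume nondegeneracy, use flatness plus the regular-element perturbation to get $\ker\overline{S}^X\subset\ker\overline{S}$, and derive a contradiction with $\dim_{\mathbb{C}}\ker\overline{S}<n-p$ from the codimension count $\dim\overline{S}^X(V)\le p-\dim\Im1 H$. The only difference is that you spell out the standard Moore-type limit argument that the paper compresses into ``it follows that $\ker\overline{S}^X\subset\ker\overline{S}$,'' which is a welcome clarification rather than a deviation.
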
 
 \noindent Otherwise $\overline{S}^{X}(V) \cap (\overline{S}^{X}(V))^{\perp} = \{0\}$ and it follows that $\ker \overline{S}^{X} \subset \ker \overline{S}$. Since, by definition, $\ker \overline{S} \subset\ker\overline{S}^{X}$, we conclude $\dim_{\mathbb{C}} \ker \overline{S} = \dim_{\mathbb{C}} \ker \overline{S}^{X} \geq n-p+k,$ which is a contradiction.
 \begin{fact}
If $\overline{S}^{X}(V)$ is a null subspace of $(\Im1 H)^{\perp}$ for all regular element $X \in \ker H$, i.e., $\langle , \rangle \equiv 0$ restricted to  $\overline{S}^{X}(V)\times  \overline{S}^{X}(V)$, for all regular element $X \in \ker H$. Then the Claim holds.
\end{fact}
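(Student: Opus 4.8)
The plan is to show that the hypothesis forces the \emph{entire} span $\mathcal{S}(\overline{S})$ to be a null subspace, after which the Claim follows at once by taking $E$ to be the real subspace of $(\Im1 H)^{\perp}$ whose complexification carries $\mathcal{S}(\overline{S})$.

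First I would upgrade the nullity of $\overline{S}^{X}(V)$ from the regular elements to \emph{all} of $\ker H$. Fix $Z,W \in V$ and consider the function $X \mapsto \langle \overline{S}(X,Z),\overline{S}(X,W)\rangle$ on the complex vector space $\ker H$. Since $\overline{S}$ is complex bilinear and $\langle\,,\,\rangle$ is the complex-bilinear extension of the inner product, this is a holomorphic quadratic form in $X$. The regular elements constitute a Zariski-open dense subset of $\ker H$ (the rank of the linear map $\overline{S}(X,\cdot)$ drops only on the common zero locus of its maximal minors, a proper algebraic subvariety), and on this subset the quadratic form vanishes by the hypothesis that each $\overline{S}^{X}(V)$ is null. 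A holomorphic polynomial vanishing on a dense set vanishes identically, so $\langle \overline{S}(X,Z),\overline{S}(X,W)\rangle = 0$ for every $X \in \ker H$ and all $Z,W \in V$.

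Next I polarize in $X$: replacing $X$ by $X+Y$ and subtracting the values at $X$ and at $Y$ yields $\langle \overline{S}(X,Z),\overline{S}(Y,W)\rangle + \langle \overline{S}(Y,Z),\overline{S}(X,W)\rangle = 0$. Combining this with the flatness identity $\langle \overline{S}(X,Z),\overline{S}(Y,W)\rangle = \langle \overline{S}(Y,Z),\overline{S}(X,W)\rangle$, which holds because $\overline{S}$ is the restriction of the flat form $S$ (equation (\ref{eq2.3})), I obtain $2\langle \overline{S}(X,Z),\overline{S}(Y,W)\rangle = 0$ for all $X,Y \in \ker H$ and $Z,W \in V$. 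Hence $\mathcal{S}(\overline{S})$ is a null subspace of $(\Im1 H)^{\perp}\otimes \mathbb{C}$.

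Finally I would take $E \subset (\Im1 H)^{\perp}$ to be the real subspace determined by $E\otimes \mathbb{C} = \mathcal{S}(\overline{S}) + \overline{\mathcal{S}(\overline{S})}$, with $E^{\perp}$ its orthogonal complement, so that $(\Im1 H)^{\perp} = E \oplus E^{\perp}$ orthogonally. By construction the image of $\overline{S}$ lies in $E\otimes\mathbb{C}$, so $\overline{S}_{E^{\perp}} = 0$ is trivially flat with $\ker(\overline{S}_{E^{\perp}}) = \ker H$, while $\overline{S}_{E} = \overline{S}$ has null image by the preceding step; the inequality $\dim \ker(\overline{S}_{E^{\perp}}) = \dim \ker H \geq \dim \ker H - \dim E^{\perp}$ then holds trivially, which is exactly the Claim. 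The one point demanding care, and the main obstacle, is the interplay between the complex-bilinear form on $(\Im1 H)^{\perp}\otimes\mathbb{C}$ and the required \emph{real} decomposition: the null span $\mathcal{S}(\overline{S})$ need not be conjugation-invariant, so one must enlarge it to the real subspace $E$ above rather than attempt to use a real form of $\mathcal{S}(\overline{S})$ directly, and then check that the projection onto $E^{\perp}$ genuinely annihilates $\overline{S}$.
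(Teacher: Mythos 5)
Your argument is correct and follows essentially the same route as the paper: extend the nullity of $\overline{S}^{X}(V)$ from the (dense set of) regular elements to all of $\ker H$ by continuity, then polarize in $X$ and combine with flatness to conclude that the whole span of $\overline{S}$ is null. The only difference is your choice of $E$; the paper simply sets $E = (\Im1 H)^{\perp}$ (so $E^{\perp}$ is trivial and the inequality in the Claim is automatic), which sidesteps the conjugation-invariance issue you raise at the end.
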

Indeed, $\langle \overline{S}(X , Z) , \overline{S}(X , W)\rangle= 0$, for all $Z , W \in V.$ Since the set of regular elements is dense, we have by continuity that $\langle\overline{S}(X , Z) , \overline{S}(X , W)\rangle= 0$, for all $ Z , W \in V$ and $X \in \ker H$ . By flatness $$0 = \langle\overline{S}(X + Y, Z), \overline{S}(X +Y, W)\rangle = 2(\overline{S}(X, W),\overline{S}(Y, Z)),$$
for all $Z, W \in V$ and $X, Y \in \ker H.$ Setting $E = (\Im1 H)^{\perp}$, we obtain the conclusions of the affirmation in this case.

By Fact 2, we can assume that there exists a regular element $X \in \ker H$ such that $\overline{S}^{X}(V)$ is not a null subspace. Using Fact 1, we have that the subspace $$U^{X} = \overline{S}^{X}(V) \cap (\overline{S}^{X}(V))^{\perp}$$ satisfies $ 1\leq \dim_{\mathbb{C}} U^X\leq 3$. Using the almost complex structure it is not hard to see that  $U^X$ is a complex vector space and we can write $U^X = E \otimes \mathbb{C}$, where $E$ is an even real dimensional subspace. By this discussion we have that $\dim_{\mathbb{C}} U^X = 2.$ 


Using the flatness of $\overline{S}$ and that $\dim U^X =2$  we can see that $U^{X} $ is orthogonal to $\Im1 \overline{S}$. Thus, $\Im1 \overline{S} \cap \Im1 \overline{S}^{\perp} = U^X $ and the projection of $\overline{S}$ in $E^{\perp}\oplus \mathbb{C}$ is non-degenerate. In particular, $\dim_{\mathbb{C}} \overline{S}_{E^\perp} \geq \dim_{\mathbb{C}} \ker H -p + (\dim_{\mathbb{C}} \Im1 H + 1)$ and the claim holds.

\end{proof}

\begin{remark}\label{rmk:teoCod3}
We could have used Lemma 3 in \cite{DGKahlerCod3} for $\overline{S}$ when $\dim H \geq 3$ to shorten the proof of our Algebraic lemma. The proof of their lemma is not immediate as they mentioned and for this reason we prefer to make a more self-contained version.
\end{remark}

\begin{remark}\label{onlyBeta}
This Algebraic Lemma makes strong use of the complexification of the second fundamental form and has already been proved up to codimension 4 in \cite{YZKahlerCod4}. In general, Theorem 3 in \cite{DFCompositions} does not necessary holds for an arbitrary flat bilinear form.
\end{remark}

Note that the $(0,2)$ and $(1,1)$ parts of the complexified second fundamental form of $f$ contain the same algebric information of $\beta$. In fact, our Algebric Lemma is an alternative version of Theorem 3 in \cite{DFCompositions} for $\beta$ (remark \ref{onlyBeta}) up to codimension $6$.

Now we give a version of Theorem 14 in \cite{DFGenuines} that is a fundamental tool in the proof of Theorem B.

\begin{theorem}\label{GenuineKahler}
Let $f: M^{2n} \rightarrow \R^{2n+p}$ be a real K\"ahler submanifold with $2p<n$ and $p \leq 6$. Then either $f$ is $D^d$-ruled with $d \geq 2n - 2p +3l$ or $f$ extends to a real K\"ahler submanifold along each connected component of an open dense subset of $M^{2n}$.  
\end{theorem}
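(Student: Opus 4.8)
The plan is to reduce the statement to the structure theorem proved just above and then to run the proof of Theorem 14 in \cite{DFGenuines} in the K\"ahler setting, with our Algebraic Lemma playing the role of the flat symmetric bilinear form estimate used there. I would first apply the K\"ahler version of Theorem 11 to obtain, along each connected component of an open dense subset, the triple $(\tau, L^l, D^d)$ with $\tau^2 = -I$ and $l = 2k$ satisfying $(C_1)$ and $(C_2)$, together with an isometric K\"ahler ruled extension $F$ (possibly trivial). If $F$ is nontrivial, then by Proposition \ref{newProp9} it is an honest real K\"ahler extension of $f$ and we are in alternative (ii); hence the whole content of the theorem is the sharp bound $d \geq 2n - 2p + 3l$ in the complementary case, where $F$ is trivial and $f$ is $D^d$-ruled.

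For the ruled case I would work with the flat bilinear form $\beta = \alpha(\cdot,\cdot) \oplus \alpha(J\cdot,\cdot)$ and the associated form $\phi_\tau$ from the construction preceding the theorem. Because the ruling is asymptotic, $\alpha(D,D) = 0$, so $D^d$ sits inside the nullity of $\beta$; complexifying and using $T_xM \otimes \mathbb{C} = V \oplus \overline{V}$, one identifies $(D^d \otimes \mathbb{C}) \cap V$ with $\ker H \cap \ker S_{E^\perp}$, where the subspace $E$ corresponds to $L$ and the $(1,1)$ and $(2,0)$ parts $H, S$ of the complexified second fundamental form satisfy \eqref{eq2.1}--\eqref{eq2.3}. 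Feeding the Algebraic Lemma into this identification gives at once a direct bound $d \geq 2n - 2p + l$, so the actual task is to improve the coefficient of $l$ from $1$ to $3$.

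That improvement is a rigidity phenomenon rather than a pointwise count, and I would obtain it by the obstruction scheme of Theorem 14 in \cite{DFGenuines}. The idea is to argue by contradiction: if $d < 2n - 2p + 3l$, then the hypothesis $2p < n$ together with the Algebraic Lemma should force the existence of a nontrivial null subbundle $\Lambda \subset TM \oplus L$, transversal to $TM$, with $\phi_\tau(\Lambda, TM)$ null; by Proposition 12 in \cite{Belezinha} such a $\Lambda$ yields a nontrivial isometric extension, and by Proposition \ref{newProp9} a nontrivial real K\"ahler extension, contradicting the triviality of $F$. Here the Algebraic Lemma is exactly what makes the argument go through up to codimension $6$: it supplies the null subspace $E$ of real dimension $2k$ on which $\langle S(\cdot,\cdot), \tau\eta\rangle = -i\langle S(\cdot,\cdot),\eta\rangle$, which is the low-codimension sharpening that yields the sharp coefficient $3$, mirroring the way the condition $E = L$ upgrades the generic bound $2l$ to $3l$ in the minimal case of Proposition \ref{pps1}.

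The main obstacle I anticipate is precisely this improvement to the sharp coefficient $3$. The pointwise identification of $D^d$ with $\ker H \cap \ker S_{E^\perp}$ alone gives only $2n - 2p + l$; the gain comes from non-extendability, and transporting the obstruction argument of \cite{DFGenuines} into the complexified K\"ahler framework requires keeping careful track of the interplay between the involution $\tau$, the asymptotic distribution $D^d$, and the complementary null subspaces produced by the Algebraic Lemma, while checking that the subbundle extracted from the algebraic estimate is parallel enough --- i.e.\ compatible with $(C_1)$ and $(C_2)$ --- to integrate into a genuine extension. This is where the hypotheses $p \leq 6$ and $2p < n$ are used in an essential way.
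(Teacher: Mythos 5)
Your proposal follows essentially the same route as the paper, whose proof of this theorem is literally a one-line reduction: run the argument of Theorem 14 in \cite{DFGenuines}, substituting the Algebraic Lemma (applied to $\beta$) for Theorem 3 in \cite{DFCompositions} and the K\"ahler version of Theorem 11 for the original Theorem 11. Your elaboration of how the Algebraic Lemma yields the pointwise bound $2n-2p+l$ and how the non-extendability obstruction upgrades the coefficient to $3$ is a faithful (and more detailed) account of exactly that reduction.
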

\begin{proof}
The proof is similar to the proof of Theorem 14 in \cite{DFGenuines} using Algebraic Lemma for $\beta$ instead of Theorem 3 in \cite{DFCompositions} and using Theorem \ref{GenuineKahler} in place of Theorem 11 in \cite{DFGenuines}.
\end{proof}

We can now prove Theorem B.

\begin{proof}[Proof of Theorem B]
It follows from Theorem \ref{GenuineKahler} that along each connected component of an open dense subset of $M^{2n}$, either $f$ extend isometrically to a real K\"ahler submanifold, or $f$ is $D^d$-ruled with $d \geq 2n - 2p + 3l$. If the first case occurs the conclusion of the theorem is immediate. So we assume that $f$ is $D^{d}$-ruled.

Since $D^d$ is an asymptotic space we have that $A_\xi|_D (D) \subset  D^{\perp}$ for any $\xi \in T_f^{\perp}M$ and therefore \begin{equation}\label{eq:assymptoticShapeB}\dim (\ker A_\xi \cap D)\geq 2n+6l-4p,\end{equation} for all $\xi \in T_f^{\perp}M$. Also, from condition $(C_1)$ we have that $\Ker A_{\xi} = \Ker A_{\tau\xi}$ for all $\xi \in L^{2k}$, where $2k=l$. By definition of $D^d$, we have that $\Delta \cap J\Delta$ is the intersection of $D^d$ and $\bigcap_{i=1}^{k}\Ker A_{\xi_i}$ for some basis $\{\xi_1,\cdots, \xi_k, \tau\xi_1, \cdots \tau\xi_k\}$ of $L^{2k}$. Combining the inequalities \eqref{eq:assymptoticShapeB} for $\xi_1,\cdots,\xi_k$, we have that  $$\dim \Delta \cap J\Delta \geq d - k(2p-3l) \geq (2n-2p)+3l(k+1)-2pk.$$ Finally, observe that $3l(k+1)-2pk = 6k(k+1)-2pk \geq 0$ for $p \leq 6$ and the theorem follows.
\end{proof}


\bibliographystyle{plain}
\bibliography{FirstLatexDoc_lib}

\vspace{1eM}
{\footnotesize
\begin{tabular}{lll}
	Felippe Guimar\~aes && Alcides de Carvalho J\'unior \\
	IME-USP && IMPA \\
	R. do Mat\~ao, 1010 && Est. Dona Castorina 110\\
	05508-090, S\~ao Paulo -- SP, Brazil && 22460-320, Rio de Janeiro -- RJ, Brazil \\
	\textit{e-mail:} \texttt{felippe@impa.br} && \textit{e-mail:} \texttt{alcidesj@impa.br}
\end{tabular}}
\end{document}